\documentclass[11pt]{article}
\usepackage{amsmath}
\usepackage{graphicx,psfrag,epsf}
\usepackage{enumerate}
\usepackage{psfrag,epsf}
\usepackage{url} 
\usepackage{amsmath,amssymb,amsfonts,amsthm,mathtools,mathrsfs}
\usepackage{array}
\usepackage{makecell}

\usepackage{tikz,setspace}
\usepackage{pgfplots}
\usepgfplotslibrary{fillbetween}
\pgfplotsset{compat=1.16}

\usepackage{bm,bbm}
\usepackage{color}
\usepackage{subfigure}
\usepackage{algorithm,algpseudocode}
\usepackage[symbol]{footmisc}
\usepackage{scalefnt}
\usepackage{authblk} 
\usepackage{multirow,centernot}
\usepackage[colorlinks=true, citecolor=blue, urlcolor=blue]{hyperref}
\usepackage{natbib}
\usepackage{dsfont}
\usepackage[title]{appendix}
\usepackage{newtxtext,newtxmath} 

\input cyracc.def

\usepackage[left=1in,top=1.1in,right=0.5in,bottom=1in]{geometry}

\theoremstyle{definition}

\newtheorem{theorem}{Theorem}[section]

\newtheorem{corollary}[theorem]{Corollary}

\newtheorem{lemma}[theorem]{Lemma}
\newtheorem{proposition}[theorem]{Proposition}
\newtheorem{remark}[theorem]{Remark}

\makeatletter
\def\@seccntformat#1{\@ifundefined{#1@cntformat}%
	{\csname the#1\endcsname\quad}
	{\csname #1@cntformat\endcsname}
}
\makeatother

\newif\ifShowComments
\ShowCommentstrue
\def\strutdepth{\dp\strutbox}
\def\druk#1{\strut\vadjust{\kern-\strutdepth
        {\vtop to \strutdepth{%
                \baselineskip\strutdepth\vss
                        \llap{\hbox{#1}\quad}\null}}}}

\title{\bf
Exact finite-sample bias, variance, and MSE of normalized scale-invariant inequality indices under Gamma mixture models
}

\author{
\text{Roberto Vila}$^{1}$\thanks{Corresponding author: Roberto Vila, email: {rovig161@gmail.com}
}
\,\, and
\text{Felipe Quintino}$^{1}$
\\
{\small $^{1}$ Department of Statistics, University of Brasilia, Brasilia, Brazil}\\
}
\begin{document}
	\maketitle 	
	\begin{abstract}
We study the finite-sample properties of estimators of a broad class of normalized scale-invariant inequality indices (NSIIs) under finite Gamma mixture models. The class includes the Gini coefficient, the $m$th Gini index, the Extended $m$th Gini index, linear order-statistic inequality indices, and several entropy- and dispersion-based measures. Building on the Gamma--Dirichlet representation, we derive exact expressions for the expectation and finite-sample bias of the natural estimator of an NSII based on a $U$-statistic when the population follows a finite Gamma mixture with a common rate parameter. The resulting bias formula explicitly characterizes the effects of the mixture weights and component shape parameters. We also establish that the bias converges to zero as the sample size increases, yielding asymptotic unbiasedness. Furthermore, we derive exact finite-sample expressions for the variance and mean squared error of the estimator by accounting for the overlap structure among the subsets entering the $U$-statistic. In addition, we establish strong consistency and asymptotic normality. When the mixture degenerates to a single Gamma distribution, the finite-sample bias vanishes, recovering the exact unbiasedness result previously obtained for NSIIs under the Gamma model. Numerical illustrations and simulation studies are provided to investigate the finite-sample behavior of the estimator and to illustrate the theoretical findings. Overall, the proposed framework provides a unified characterization of finite-sample estimation under heterogeneous Gamma populations and quantifies the effects of mixture heterogeneity on bias, variance, and mean squared error.
	\end{abstract}
	\smallskip
	\noindent
{\small {\bfseries Keywords.}
{Gamma mixtures, normalized scale-invariant inequality indices,
Gini coefficient, finite-sample bias, variance, mean squared error.}}
	\\
	{\small{\bfseries Mathematics Subject Classification (2010).} {MSC 60E05 $\cdot$ MSC 62Exx $\cdot$ MSC 62Fxx.}}
%


\section{Introduction}

Measures of inequality, variability, and concentration play an important role in economics, finance, demography, and other fields in which the distribution of a positive-valued variable is of interest. Among the most widely used measures is the Gini coefficient, originally introduced by \cite{Gini1912,Gini1936}, which quantifies the degree of inequality in a population. Several extensions and alternative measures have subsequently been proposed to capture different aspects of heterogeneity, dispersion, and concentration. These include the $m$th Gini index of \cite{Gavilan-Ruiz2024} and \cite{Vila2025}, entropy-based measures \citep{Shannon1948}, the Atkinson index \citep{Atkinson1970}, and several order-statistic-based inequality measures. Despite their different interpretations, many of these quantities can be expressed through expectations of symmetric homogeneous functions of independent observations.

A general framework encompassing a broad collection of such measures was developed by \cite{Vila2026NSII}. Let $g:(0,\infty)^m\rightarrow[0,\infty)$ be a measurable symmetric function that is homogeneous of degree one and satisfies an appropriate normalization condition. The associated normalized scale-invariant inequality index (NSII) is defined by
\[
I(g)
=
\frac{\mathbb{E}\left[g(X_1,\ldots,X_m)\right]}
     {m C_g\mu},
\]
where $\mu=\mathbb{E}[X]$ and $C_g$ is the sharp normalizing constant. This formulation includes, among others, the Gini coefficient, the $m$th Gini index, the extended $m$th Gini index \citep{Vila2026a}, linear order-statistic inequality indices \citep{Vila2026b}, power-type indices \citep{Atkinson1970}, Shannon entropy \citep{Shannon1948,CoverThomas2006}, product-type indices \citep{Bullen2003}, symmetric subcase indices \citep{MarshallOlkinArnold2011}, and the maximum and minimum indices \citep{DavidNagaraja2003}. Thus, rather than studying the finite-sample properties of each measure separately, the NSII framework permits a unified analysis based on the properties of the generating function $g$.

An important question concerns the statistical estimation of NSIIs. \cite{Vila2026NSII} established a general unbiased estimation result when the underlying population follows a Gamma distribution. Their approach exploits the Gamma--Dirichlet decomposition, under which the normalized vector of Gamma observations is independent of their total sum. This representation allows both the population index and its natural estimator based on a $U$-statistic to be expressed in terms of expectations with respect to Dirichlet distributions. In particular, the estimator is exactly unbiased for every NSII satisfying the stated regularity conditions and for every sample size $n\geqslant m$. This result provides a unified explanation for previously established unbiasedness properties of several particular inequality measures under Gamma populations and shows that exact unbiasedness is a structural consequence of the Gamma--Dirichlet decomposition rather than a property specific to the Gini coefficient or to a particular choice of $g$.

The assumption of a single Gamma population, however, imposes a degree of homogeneity that may be inappropriate when the population consists of subpopulations with different distributional characteristics. In income analysis, for example, populations may contain groups with substantially different income levels, and finite mixtures of parametric distributions provide a natural way to represent such heterogeneity. Gamma mixtures have therefore been used in the modeling of income distributions and related economic applications; see, for example, \cite{Chotikapanich2008}. More generally, mixture models provide a flexible representation in which observations arise from several latent components, each characterized by its own shape parameter and associated with a mixing probability.

Introducing population heterogeneity changes the finite-sample estimation problem substantially. In particular, the exact unbiasedness obtained under a single Gamma distribution does not generally extend to a finite Gamma mixture. \cite{VilaSaulo2025GiniMixture} studied this phenomenon for the Gini coefficient and showed that its estimator can exhibit finite-sample bias under Gamma mixture populations. More recently, \cite{mth-mixtures} considered the $m$th Gini index under finite Gamma mixtures and derived explicit expressions for its population value and for the expectation and bias of its natural estimator based on a $U$-statistic, together with asymptotic results under a common rate parameter. These contributions demonstrate that the finite-sample behavior of inequality estimators can change substantially in the presence of mixture heterogeneity.

The available results for Gamma mixtures, however, remain specialized to particular inequality measures. In particular, the analysis of \cite{VilaSaulo2025GiniMixture} concerns the classical Gini coefficient, whereas \cite{mth-mixtures} focuses on the $m$th Gini index. A general finite-sample theory for the estimator associated with an arbitrary NSII under a finite Gamma mixture is still missing. In particular, unified expressions for its finite-sample expectation, bias, variance, and mean squared error have not been established. Thus, while the single-Gamma framework provides exact unbiasedness for the entire NSII class, the corresponding finite-sample theory under Gamma mixtures remains to be developed.

The purpose of the present paper is to develop such a finite-sample framework for NSII estimation under finite Gamma mixture models. We consider a finite mixture of Gamma distributions sharing a common rate parameter, while allowing the mixture components to have different shape parameters and mixing probabilities. This assumption preserves the Gamma--Dirichlet structure conditionally on the latent mixture components and makes it possible to derive exact representations for the relevant quantities. Specifically, we derive general expressions for the expectation and finite-sample bias of the natural estimator based on a $U$-statistic for the entire NSII class. The resulting bias formula explicitly describes the effects of the mixture weights and component shape parameters and provides a basis for investigating the sign and magnitude of the bias. We further establish that the bias converges to zero as the sample size increases, thereby proving asymptotic unbiasedness. When the mixture degenerates to a single Gamma component, the finite-sample bias vanishes identically, recovering the unbiasedness result of \cite{Vila2026NSII}.

We then obtain exact finite-sample expressions for the variance and mean squared error of the estimator by accounting for the dependence between overlapping subsamples in the $U$-statistic. These expressions depend on the mixture weights and component shape parameters, whereas the common rate parameter disappears after scale-invariant normalization. In addition, we establish strong consistency and asymptotic normality, providing a comprehensive account of both finite-sample and large-sample estimation properties under heterogeneous Gamma populations.

The numerical illustrations and simulation study complement the theoretical results by examining the finite-sample behavior of the estimator under different Gamma mixture configurations. In particular, they investigate how the mixture weights, component shape parameters, and sample size affect the bias, variance, and mean squared error. The numerical results also illustrate the asymptotic decay of the bias and the consistency of the estimator as the sample size increases.

The remainder of the paper is organized as follows. Section~\ref{Exact expectation and bias of NSIIs under Gamma mixture models} develops the main theoretical results for NSII estimation under finite Gamma mixture models, including the population representation, exact expectation and finite-sample bias, consistency, asymptotic normality, and the asymptotic behavior of the bias. Section~\ref{sec:variance} derives exact finite-sample expressions for the variance and mean squared error of the estimator. Section~\ref{Simulation study} presents numerical illustrations and simulation results to assess its finite-sample behavior and illustrate the theoretical findings. Finally, Section~\ref{sect-5} presents the concluding remarks.

\section{
NSII estimation under Gamma mixture models
}
\label{Exact expectation and bias of NSIIs under Gamma mixture models}

This section derives exact representations for a broad class of normalized scale-invariant inequality indices under finite Gamma mixture models. We obtain closed-form expressions for the expectation and finite-sample bias of their corresponding $U$-statistic estimators, establish strong consistency and asymptotic normality, and conclude by examining the non-negativity and asymptotic behavior of the bias.

\subsection{Gamma mixture model and NSII framework}
\label{sec:gamma-mixture-nsii}

Throughout this section, let $X$ be a random variable following a
Gamma mixture distribution, denoted by
$
X\sim\mathrm{GM}(\boldsymbol{\theta}),
\ 
\boldsymbol{\theta}
=
(\pi_1,\ldots,\pi_\ell,\alpha_1,\ldots,\alpha_\ell,\lambda)^\top,
$
where
$
\pi_r>0,\ 
\sum_{r=1}^{\ell}\pi_r=1,\ 
\alpha_r>0,\ 
\lambda>0.
$
The probability density function of $X$ is
\begin{equation}
\label{eq:gamma-mixture}
f_X(x;\boldsymbol{\theta})
=
\sum_{r=1}^{\ell}
\pi_r \, 
\frac{\lambda^{\alpha_r}}{\Gamma(\alpha_r)} \, 
x^{\alpha_r-1}\text{e}^{-\lambda x},
\quad x>0,
\end{equation}
where $\Gamma(\cdot)$ denotes the Gamma function. Its mean is
$
\mu\coloneqq\mathbb{E}[X]
=
({1}/{\lambda})
\sum_{r=1}^{\ell}\pi_r\alpha_r.
$
Figure 1 in \cite{VilaSaulo2025GiniMixture} illustrates the flexibility
of this class of distributions and some of its important special cases.


Let $X_1,\ldots,X_n$ be independent and identically distributed (iid)
copies of $X\sim\mathrm{GM}(\boldsymbol{\theta})$. Then, for each
$i=1,\ldots,n$,
\begin{align*}
\label{representation-X-i}
X_i
\stackrel{d}{=}
\sum_{r=1}^{\ell}
\mathds{1}_{\{Y_i=r\}}Z_{ir},
\end{align*}
where $\stackrel{d}{=}$ denotes equality in distribution,  $\mathds{1}_A$ is the indicator function of the event $A$, $Y_1,\ldots,Y_n$ are iid discrete random variables with probability mass functions
$
\mathbb{P}(Y_i=r)=\pi_r,
\ r=1,\ldots,\ell,
$
and, 
\[
X_i\mid \{Y_i=r\}
\overset{d}{=}
Z_{ir}\sim\operatorname{Gamma}(\alpha_r,\lambda).
\]

For $m\geqslant 2$, let
$g:(0,\infty)^m\to[0,\infty)$ be a measurable symmetric function,
satisfying
$
g(x_1,\ldots,x_m)
=
g(x_{\pi(1)},\ldots,x_{\pi(m)})
$
for every permutation $\pi$ of $\{1,\ldots,m\}$, and the following
conditions:
\begin{enumerate}
    \item[(A1)]
    \[
    g(c x_1,\ldots,c x_m)
    =
    c g(x_1,\ldots,x_m),
    \quad c>0,
    \]
    so that $g$ is homogeneous of degree one;

    \item[(A2)]
    \[
    C_g:=
    \sup_{\boldsymbol{x}\in(0,\infty)^m}
    \frac{g(x_1,\ldots,x_m)}
    {\sum_{i=1}^m x_i}
    \in(0,\infty).
    \]
\end{enumerate}

Following \cite{Vila2026NSII}, the normalized scale-invariant inequality index (NSII) generated by $g$ is defined as
\begin{equation}
\label{eq:index}
I(g)
\coloneqq
I(g;X)
=
\frac{\mathbb{E}[g(X_1,\ldots,X_m)]}
{mC_g\mu}.
\end{equation}
Since (A2) implies
$
0\leqslant g(x_1,\ldots,x_m)
\leqslant C_g\sum_{i=1}^m x_i,
$
it follows that
$
0\leqslant I(g)\leqslant 1.
$
Moreover, if $g(x,\ldots,x)=0$ for all $x>0$ and $X$ is degenerate,
then $I(g)=0$. By (A1),
$
I(g;cX)=I(g;X),\ c>0,
$
so that $I(g)$ is scale-invariant.

Since the distribution of $X$ is fixed throughout this section, we
write $I(g)$ without explicitly indicating its dependence on the
distribution.

The main examples of NSIIs, together with their generating functions,
sharp normalizing constants, and corresponding index forms, are
summarized in Table~\ref{NPRI-index}. This collection of examples
first appeared in \cite{Vila2026NSII}.
\begin{table}[htb!] 
	\centering
	\resizebox{\textwidth}{!}{
		\renewcommand{\arraystretch}{2.2}
		\setlength{\extrarowheight}{2.2pt}
		\begin{tabular}{|c|c|c|c|}
			\hline
			\textbf{Name} & ${g}(x_1,\ldots,x_m)$ & $C_{ g}$ & ${I}(g)$\\
			\hline
            
			Gini {coefficient} 
			& $|x_1-x_2|$
			& $1$ 
			& $\displaystyle \frac{\mathbb{E}|X_1-X_2|}{2\mu}$ 
			\\
			\hline
			
			The $m$th Gini index 
			& $x_{m:m}-x_{1:m}$
			& $1$ 
			& $\displaystyle \frac{\mathbb{E}[X_{m:m}-X_{1:m}]}{m\mu}$
			\\
			\hline
			
			Extended $m$th Gini index 
			& $x_{k:m}-x_{j:m}$, $1\leqslant j<k\leqslant m$
			& {$\displaystyle \frac{1}{m-k+1}$ }
   			& 
            $\displaystyle \frac{
            {(m-k+1)}
            \mathbb{E}[X_{k:m}-X_{j:m}]}{m\mu}$         
			\\
			\hline
			
			Linear order-statistic (OS) inequality index 
			& \makecell{$\displaystyle \sum_{k=1}^m a_k x_{k:m}$, 
			\vspace*{0.1cm}$\displaystyle \sum_{k=1}^m a_k = 0$, \\  \vspace*{0.1cm}  $a_1 \leqslant \cdots \leqslant 
            a_m,
            \ 
            {a_m =1}
            $
            }
			& $1$ 
			& $\displaystyle
			\frac{\sum_{k=1}^m a_k\mathbb{E}[X_{k:m}]}{m\mu}$ 
			\\ 
			\hline
			
			Smoothed CV index (SCV)
			& $\displaystyle
			\frac{(x_1-x_2)^2}{x_1+x_2}$
			& $1$ 
			& $ \displaystyle
			\frac{\mathbb{E}\big[\frac{(X_1-X_2)^2}{X_1+X_2}\big]}{2\mu}$ 
			\\
			\hline
			
			Power index ($p>0$) 
			& $\displaystyle
			x_1^p(x_1+x_2)^{1-p}+x_2^p(x_1+x_2)^{1-p}$
			& $\max\{1,2^{1-p}\}$ 
			& $\displaystyle
			\frac{\mathbb{E}[g(X_1,X_2)]}{2\mu \max\{1,2^{1-p}\}}$
			\\
			\hline
			
			Shannon entropy index 
			& $\displaystyle -x_1\log\left(\frac{x_1}{x_1+x_2}\right)-x_2\log\left(\frac{x_2}{x_1+x_2}\right)$ 
			& $\log(2)$ 
			& $\displaystyle \frac{\mathbb{E}[g(X_1,X_2)]}{2\mu\log(2)}$ 
			\\
			\hline
			
			Product index 
			& $\displaystyle\frac{x_1x_2}{x_1+x_2}$
			& $\displaystyle\frac{1}{4}$ 
			& $\displaystyle \frac{2\,\mathbb{E}\!\left[\frac{X_1X_2}{X_1+X_2}\right]}{\mu}$ 
			\\
			\hline
			
			{Symmetric subcase index}
			& $\displaystyle \frac{x_1^\theta x_2^\beta}{(x_1+x_2)^{\theta+\beta-1}}$, $\theta,\beta>0$, {\color{black}$\theta=\beta$}
			& $ \displaystyle \frac{\theta^\theta\beta^\beta}{(\theta+\beta)^{\theta+\beta}}$ 
			& $ \displaystyle \frac{(\theta+\beta)^{\theta+\beta}\mathbb{E}[g(X_1,X_2)]}{
            2\theta^\theta\beta^\beta
            \mu}$ 
			\\
			\hline
			
			
			Maximum index  
			& {$\max\{x_1,x_2\}$}
			& $1$ 
			& $\displaystyle\frac{\mathbb{E}[\max\{X_1,X_2\}]}{2\mu}$ 
			\\
			\hline
			
			Minimum index 
			& $\displaystyle\min\{x_1,x_2\}$
			& $\displaystyle\tfrac{1}{2}$ 
			& $\displaystyle \frac{\mathbb{E}[\min\{X_1,X_2\}]}{\mu}$ 
			\\
			\hline
			
			
		\end{tabular}
	}
\caption{
	Examples of {NSII}s. The examples listed in this table are related to the Gini index \citep{Gini1912,Gini1936,Yitzhaki2013}, the $m$th Gini index	\citep{Gavilan-Ruiz2024,Vila2025}, the Extended $m$th Gini index	\citep{Vila2026a}, the Linear order-statistic inequality index	\citep{Vila2026b}, the Power index \citep{Atkinson1970},	the Shannon entropy index \citep{Shannon1948,CoverThomas2006}, the Product index \citep{Bullen2003}, the Symmetric subcase (of the asymmetric) index	\citep{MarshallOlkinArnold2011}, and the Maximum and Minimum indices	\citep{DavidNagaraja2003}. Reproduced from \cite{Vila2026NSII}.}
	\label{NPRI-index}
\end{table}

Define the $U$-statistic
\begin{align}\label{eq:Us}
U_n(g)
\coloneqq
{1\over \binom{n}{m}}
\sum_{1\leqslant i_1<\cdots<i_m\leqslant n}
g(X_{i_1},\ldots,X_{i_m}).
\end{align}
Then the natural estimator based on a $U$-statistic
 of $I(g)$ is
\begin{align}\label{eq:Tn}
\widehat{I(g)}
=
\frac{U_n(g)}{mC_g\overline X},
\quad n\geqslant m,
\end{align}
where $\overline{X}=(1/{n})\sum_{i=1}^n X_i$ denotes the sample mean.

The derivation of the subsequent results relies on the following structural representation.


\begin{lemma}[Gamma--Dirichlet mixture representation]
\label{lem:GD}
Let
$
Y_i
\in
\{1,\ldots,\ell\},
$
be latent variables satisfying
$
\mathbb{P}(Y_i=r)
=
\pi_r,
\ 
r=1,\ldots,\ell.
$
Let
\[
T\coloneqq\sum_{i=1}^m X_i,
\quad
V_i^*\coloneqq\frac{X_i}{T},
\quad i=1,\ldots,m.
\]
Then, 
conditional on
$
Y_1={j_1},
\ldots,
Y_m={j_m},
$
the random vector admits the stochastic representation
\begin{align}\label{rep-est}
(X_1,\ldots,X_m)^\top
\stackrel{d}{=}
T(V_1^*,\ldots,V_m^*)^\top,
\end{align}
where
\begin{enumerate}
    \item[(a)]  $
T\mid 
\left\{
Y_1=j_1,\ldots,Y_m=j_m
\right\}
\sim
\text{Gamma}
\left(
\sum_{i=1}^{m}\alpha_{j_i},
\lambda
\right),
$
    \item[(b)] 
$
(V_1^*,\ldots,V_m^*)^\top\mid 
\left\{
Y_1=j_1,\ldots,Y_m=j_m
\right\}
\sim
\operatorname{Dir}
(
\alpha_{j_1},
\ldots,
\alpha_{j_m}
)
$, and
    \item[(c)] 
\(T\) and
\((V_1^*,\ldots,V_m^*)^\top\)
being conditionally independent given
$Y_1=j_1,\ldots,Y_m=j_m$.
\end{enumerate}

Moreover, if
$
S
=
\sum_{i=1}^{n}X_i,
$
then
\[
\left(
\frac{X_1}{S},
\ldots,
\frac{X_n}{S}
\right)^\top
\Bigg|
\left\{
Y_1=j_1,\ldots,Y_n=j_n
\right\}
\sim
\operatorname{Dir}
(
\alpha_{j_1},
\ldots,
\alpha_{j_n}
)
\]
and 
\(S\) and
\((X_1/S,\ldots,X_n/S)^\top\)
are conditionally independent given
$Y_1=j_1,\ldots,Y_n=j_n$.
\end{lemma}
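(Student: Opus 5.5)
The plan is to condition on the latent labels and reduce everything to the classical Gamma--Dirichlet (Lukacs) decomposition for independent Gamma variables with a common rate. Fix labels $j_1,\ldots,j_n\in\{1,\ldots,\ell\}$. Because the pairs $(Y_i,X_i)$ are iid and $X_i\mid\{Y_i=r\}\sim\mathrm{Gamma}(\alpha_r,\lambda)$, the conditional law of $(X_1,\ldots,X_n)$ given $\{Y_1=j_1,\ldots,Y_n=j_n\}$ is the product measure $\bigotimes_{i}\mathrm{Gamma}(\alpha_{j_i},\lambda)$. I will verify this factorization directly from the joint density $\prod_i \pi_{j_i}\lambda^{\alpha_{j_i}}x_i^{\alpha_{j_i}-1}\mathrm{e}^{-\lambda x_i}/\Gamma(\alpha_{j_i})$ divided by $\prod_i\pi_{j_i}$. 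So conditionally the $X_i$ are independent Gammas with possibly different shapes but the \emph{same} rate $\lambda$. The common rate is exactly what makes the decomposition below work.

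Next, I prove the decomposition for a generic finite collection, say $k$ independent $\mathrm{Gamma}(a_i,\lambda)$ variables. I use the change of variables $(x_1,\ldots,x_k)\mapsto(t,v_1,\ldots,v_{k-1})$ with $t=\sum x_i$, $v_i=x_i/t$, $v_k=1-\sum_{i<k}v_i$, whose Jacobian is $t^{k-1}$. Substituting into the product density gives
\[
\frac{\lambda^{\sum a_i}}{\prod_i\Gamma(a_i)}\,t^{\sum a_i-1}\mathrm{e}^{-\lambda t}\prod_{i=1}^k v_i^{a_i-1},
\]
which factors as the $\mathrm{Gamma}(\sum a_i,\lambda)$ density in $t$ times the $\mathrm{Dir}(a_1,\ldots,a_k)$ density in $v$. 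The normalizing constants match via $\Gamma(\sum a_i)$. This factorization yields (a), (b) and (c) simultaneously.

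Applying this with $k=m$ and $a_i=\alpha_{j_i}$ gives (a)--(c) for $T$ and $(V_1^*,\ldots,V_m^*)$. The representation \eqref{rep-est} is then an algebraic identity, $X_i=T V_i^*$, holding surely. Applying it with $k=n$ and $a_i=\alpha_{j_i}$, conditioning on all $n$ labels, gives the statement for $S$ and $(X_1/S,\ldots,X_n/S)$.

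The only delicate point is the first step. When conditioning on $Y_1,\ldots,Y_m$ in the $m$-variable statement, I must make sure the conditional independence of $X_1,\ldots,X_m$ is not disturbed by the remaining labels. This holds because $(Y_i,X_i)$ are independent across $i$, so conditioning on a subset of the labels affects only the corresponding coordinates. Everything else is the routine Jacobian computation.
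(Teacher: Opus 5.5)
Your proposal is correct and follows the paper's proof. Both condition on the labels so that the $X_i$ become independent $\mathrm{Gamma}(\alpha_{j_i},\lambda)$ variables with a common rate, apply the Gamma--Dirichlet decomposition, note that $X_i=TV_i^*$ holds identically, and repeat with all $n$ labels. The only difference is that you derive the decomposition yourself via the change of variables with Jacobian $t^{k-1}$, and you make explicit why conditioning on only $Y_1,\ldots,Y_m$ is harmless. The paper instead cites the decomposition as a classical result.
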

\begin{proof}
Conditional on
$
Y_i=j_i,\  i=1,\ldots,m,
$
the random variables $X_1,\ldots,X_m$ are independent, with
$
X_i\mid \{Y_i=j_i\}
\sim
\operatorname{Gamma}(\alpha_{j_i},\lambda),
\ i=1,\ldots,m.
$
The classical Gamma--Dirichlet decomposition \citep{Devroye1986}
then implies that
\[
T
\mid 
\left\{Y_1=j_1,\ldots,Y_m=j_m\right\}
\sim
\operatorname{Gamma}
\left(
\sum_{i=1}^{m}\alpha_{j_i},\lambda
\right),
\]
and
\[
(V_1^*,\ldots,V_m^*)^\top
\mid 
\left\{Y_1=j_1,\ldots,Y_m=j_m\right\}
\sim
\operatorname{Dir}
(\alpha_{j_1},\ldots,\alpha_{j_m}),
\]
with $T$ and $(V_1^*,\ldots,V_m^*)^\top$ being conditionally independent given
$Y_1=j_1,\ldots,Y_m=j_m$.

Consequently, using the identity
\[
(X_1,\ldots,X_m)^\top
=
T
(V_1^*,\ldots,V_m^*)^\top,
\]
we obtain the
stochastic representation in \eqref{rep-est}.

Applying the same argument to $X_1,\ldots,X_n$, conditional on
$Y_1=j_1,\ldots,Y_n=j_n$,
and letting $S=\sum_{i=1}^{n}X_i$, we obtain
\[
S
\;\big|\;
\left\{Y_1=j_1,\ldots,Y_n=j_n\right\}
\sim
\operatorname{Gamma}
\left(
\sum_{i=1}^{n}\alpha_{j_i},\lambda
\right),
\]
and
\[
\left(
\frac{X_1}{S},\ldots,\frac{X_n}{S}
\right)^\top
\Bigg|\,
\left\{Y_1=j_1,\ldots,Y_n=j_n\right\}
\sim
\operatorname{Dir}
(\alpha_{j_1},\ldots,\alpha_{j_n}),
\]
with $S$ and the normalized vector $\left(
{X_1}/{S},\ldots,{X_n}/{S}
\right)^\top$ being conditionally independent  given
$Y_1=j_1,\ldots,Y_n=j_n$.
\end{proof}


Lemma~\ref{lem:GD} constitutes the key probabilistic ingredient of this section. The representation of the population inequality index, the expectation of its estimator, and its exact finite-sample bias are all direct consequences of this decomposition together with the homogeneity property of the kernel \(g\).

\subsection{Population representation}

The following proposition provides an explicit representation of the population inequality index in terms of Dirichlet expectations.

\begin{proposition}
\label{prop:Ig}
Let $g:(0,\infty)^m\to[0,\infty)$ be a measurable symmetric function
satisfying {\rm(A1)}--{\rm(A2)}. Then the NSII $I(g)$ admits the
representation
\begin{equation}
\label{eq:Ig-Dir}
I(g)
=
\frac{1}
{mC_g\sum_{r=1}^{\ell}\pi_r\alpha_r}
\sum_{j_1,\ldots,j_m=1}^{\ell}
\left(
\prod_{i=1}^{m}\pi_{j_i}
\right)
\left(
\sum_{i=1}^{m}\alpha_{j_i}
\right)
\mathbb{E}_{\operatorname{Dir}
(\alpha_{j_1},\ldots,\alpha_{j_m})}
\left[
g(V_1,\ldots,V_m)
\right],
\end{equation}
where
$
(V_1,\ldots,V_m)^\top
\sim
\operatorname{Dir}
(\alpha_{j_1},\ldots,\alpha_{j_m}).
$
\end{proposition}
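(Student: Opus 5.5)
We will compute the numerator $\mathbb{E}[g(X_1,\ldots,X_m)]$ by conditioning on the latent labels $Y_1,\ldots,Y_m$ and then applying Lemma~\ref{lem:GD}. The denominator needs no work: $\mu=(1/\lambda)\sum_{r=1}^{\ell}\pi_r\alpha_r$ is already known.

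First, by the law of total expectation and the independence of the $Y_i$, we write
\[
\mathbb{E}[g(X_1,\ldots,X_m)]
=
\sum_{j_1,\ldots,j_m=1}^{\ell}
\Big(\prod_{i=1}^m\pi_{j_i}\Big)\,
\mathbb{E}\big[g(X_1,\ldots,X_m)\,\big|\,Y_1=j_1,\ldots,Y_m=j_m\big].
\]
All terms are non-negative because $g\geqslant 0$, so the interchange is justified by Tonelli. Finiteness follows from (A2): $g\leqslant C_g T$, and $T$ has finite conditional mean.

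Second, inside each conditional expectation we use the representation \eqref{rep-est}, $(X_1,\ldots,X_m)=T(V_1^*,\ldots,V_m^*)$. Homogeneity (A1) then gives $g(X_1,\ldots,X_m)=T\,g(V_1^*,\ldots,V_m^*)$. By part (c) of Lemma~\ref{lem:GD}, $T$ and the normalized vector are conditionally independent, so the conditional expectation factors as $\mathbb{E}[T\mid\cdot]\cdot\mathbb{E}[g(V^*)\mid\cdot]$.

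By part (a), $\mathbb{E}[T\mid\cdot]=\lambda^{-1}\sum_{i=1}^m\alpha_{j_i}$. By part (b), $\mathbb{E}[g(V^*)\mid\cdot]=\mathbb{E}_{\operatorname{Dir}(\alpha_{j_1},\ldots,\alpha_{j_m})}[g(V_1,\ldots,V_m)]$. This Dirichlet expectation is finite since $g(V)\leqslant C_g\sum_i V_i=C_g$.

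Finally, we divide by $mC_g\mu=mC_g\lambda^{-1}\sum_r\pi_r\alpha_r$. The factor $\lambda^{-1}$ cancels and we obtain \eqref{eq:Ig-Dir}.

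The only delicate point is that the product form requires conditional independence of $T$ and $V^*$ together with integrability. Both are handled by the bound $0\leqslant g(V^*)\leqslant C_g$, so we expect no real obstacle.
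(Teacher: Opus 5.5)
Your proposal is correct and follows the paper's proof essentially step by step. You condition on the labels, use Lemma~\ref{lem:GD} and homogeneity to factor the conditional expectation into $\mathbb{E}[T\mid\cdot]=\lambda^{-1}\sum_{i=1}^m\alpha_{j_i}$ times the Dirichlet expectation, average over the labels, and divide by $mC_g\mu$ so that $\lambda$ cancels; your Tonelli and integrability remarks, based on $0\leqslant g\leqslant C_g\sum_i x_i$, make explicit what the paper leaves implicit.
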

\begin{proof}
By the law of total expectation,
\[
\mathbb{E}[g(X_1,\ldots,X_m)]
=
\mathbb{E}
\left[
\mathbb{E}
\left(
g(X_1,\ldots,X_m)
\mid
Y_1,\ldots,Y_m
\right)
\right].
\]

Condition on
$
Y_1={j_1},
\ldots,
Y_m={j_m}.
$
By Lemma~\ref{lem:GD},
$
(X_1,\ldots,X_m)^\top
\stackrel{d}{=}
T(V_1^*,\ldots,V_m^*)^\top,
$
where
$$
T\mid 
\left\{
Y_1=j_1,\ldots,Y_m=j_m
\right\}
\sim
\text{Gamma}
\left(
\sum_{i=1}^{m}\alpha_{j_i},
\lambda
\right),
$$
$$
(V_1^*,\ldots,V_m^*)^\top\mid 
\left\{
Y_1=j_1,\ldots,Y_m=j_m
\right\}
\sim
\operatorname{Dir}
(
\alpha_{j_1},
\ldots,
\alpha_{j_m}
),
$$
and \(T\) is conditionally independent of
\((V_1^*,\ldots,V_m^*)^\top\).

Since \(g\) is positively homogeneous of degree one,
$
g(TV_1^*,\ldots,TV_m^*)
=
Tg(V_1^*,\ldots,V_m^*),
$
and therefore, for $(V_1,\ldots,V_m)^\top
\sim \operatorname{Dir}
(\alpha_{j_1},\ldots,\alpha_{j_m})
$,
\begin{align*}
\mathbb{E}
\bigl[
g(X_1,\ldots,X_m)
&\mid
Y_1=j_1,\ldots,Y_m=j_m
\bigr]
\\[0,2cm]
&=
\mathbb{E}[T\mid
Y_1=j_1,\ldots,Y_m=j_m]
\,
\mathbb{E}
[g(V_1^*,\ldots,V_m^*)\mid
Y_1=j_1,\ldots,Y_m=j_m]
\\[0,2cm]
&=
\frac{
\sum_{i=1}^{m}\alpha_{j_i}
}
{\lambda}
\,
\mathbb{E}_{\operatorname{Dir}
(\alpha_{j_1},\ldots,\alpha_{j_m})}
[g(V_1,\ldots,V_m)],
\end{align*}
where, in the last equality, we used
$
\mathbb{E}[T\mid Y_1=j_1,\ldots,Y_m=j_m]
=
\left(
\sum_{i=1}^{m}\alpha_{j_i}
\right)/
\lambda.
$
Thus,
\[
\begin{aligned}
\mathbb{E}
\left[
g(X_1,\ldots,X_m)
\mid
Y_1,\ldots,Y_m
\right]
=
\frac{
\sum_{i=1}^{m}\alpha_{Y_i}
}
{\lambda}
\,
\mathbb{E}_{\operatorname{Dir}
(\alpha_{Y_1},\ldots,\alpha_{Y_m})}
[g(V_1,\ldots,V_m)].
\end{aligned}
\]

Taking expectation with respect to the latent variables $Y_1,\ldots,Y_n$ and applying the law of total expectation yields
\[
\begin{aligned}
\mathbb{E}[g(X_1,\ldots,X_m)]
=
\frac1{\lambda}
\sum_{j_1,\ldots,j_m=1}^{\ell}
\left(
\prod_{i=1}^{m}\pi_{j_i}
\right)
\left(
\sum_{i=1}^{m}\alpha_{j_i}
\right)
\mathbb{E}_{\operatorname{Dir}
(\alpha_{j_1},\ldots,\alpha_{j_m})}
[g(V_1,\ldots,V_m)].
\end{aligned}
\]

Finally, since
$
\mu
=
({1}/{\lambda})\sum_{r=1}^{\ell}\pi_r\alpha_r,
$
the stated representation follows immediately from the above equality
and the definition of $I(g)$.
\end{proof}

The previous proposition shows that the population inequality index depends on the Gamma mixture model only through the mixture weights and shape parameters. In particular, the common rate parameter \(\lambda\) disappears after normalization by the population mean, reflecting the scale invariance of \(I(g)\). This representation also reveals that evaluating the index reduces to computing expectations with respect to Dirichlet distributions, a property that will play a central role in the derivation of the expectation and bias of the estimator.

For the different choices of the generator $g$, the expectations
$
\mathbb{E}_{\operatorname{Dir}(\alpha_{j_1},\ldots,\alpha_{j_m})}
\left[
g(V_1,\ldots,V_m)
\right],
$
where
$
(V_1,\ldots,V_m)^\top
\sim
\operatorname{Dir}(\alpha_{j_1},\ldots,\alpha_{j_m}),
$
appearing in Proposition~\ref{prop:Ig} are explicitly given in
Table~\ref{tab:dirichlet_expectations}.

\subsection{Expectation and finite-sample bias}

We now derive an exact expression for the expectation of the estimator \(\widehat{I(g)}\) in \eqref{eq:Tn}. The result shows that its expectation can also be written entirely in terms of Dirichlet expectations.

\begin{theorem}
\label{thm:Expectation}
Let $g:(0,\infty)^m\to[0,\infty)$ be a measurable symmetric function
satisfying {\rm(A1)}--{\rm(A2)}.  Then the expectation of the estimator \(\widehat{I(g)}\) can be written as
\[
\begin{aligned}
\mathbb{E}[\widehat{I(g)}]
=
\frac{n}{mC_g}
\sum_{j_1,\ldots,j_n=1}^{\ell}
\left(
\prod_{i=1}^{n}\pi_{j_i}
\right) 
\left(
{\sum_{i=1}^m \alpha_{j_i}} 
\right)
{
\mathbb{E}_{\operatorname{Dir}
(\alpha_{j_1},\ldots,\alpha_{j_m})}
\!\left[
g(V_1,\ldots,V_m)
\right]
\over 
\sum_{i=1}^n \alpha_{j_i}
},
\end{aligned}
\]
where
$
(V_1,\ldots,V_m)^\top
\sim
\operatorname{Dir}
(
\alpha_{j_1},
\ldots,
\alpha_{j_m}
)
$.
\end{theorem}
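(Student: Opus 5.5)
We condition on the full latent vector $(Y_1,\ldots,Y_n)$ and use the second part of Lemma~\ref{lem:GD}. Write $S=\sum_{i=1}^n X_i = n\overline X$. Then
\[
\widehat{I(g)} = \frac{n}{mC_g}\,\frac{1}{\binom{n}{m}}\sum_{1\leqslant i_1<\cdots<i_m\leqslant n} \frac{g(X_{i_1},\ldots,X_{i_m})}{S}.
\]
By (A1), each summand equals $g(X_{i_1}/S,\ldots,X_{i_m}/S)$. This is a function of the normalized vector $W=(X_1/S,\ldots,X_n/S)^\top$ alone. Conditional on $Y_1=j_1,\ldots,Y_n=j_n$, the vector $W$ is $\operatorname{Dir}(\alpha_{j_1},\ldots,\alpha_{j_n})$. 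So the conditional expectation of $\widehat{I(g)}$ reduces to Dirichlet expectations of $g$ evaluated at $m$ coordinates of an $n$-dimensional Dirichlet vector.

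\textbf{Step 1: express the subvector through an $m$-dimensional Dirichlet vector.} By the aggregation and independence properties of the Dirichlet distribution, or directly via the Gamma construction, we can write
\[
(W_{i_1},\ldots,W_{i_m}) = B\,(V_1,\ldots,V_m),
\]
where:
\begin{itemize}
\item $B=\sum_{k=1}^m W_{i_k}\sim \mathrm{Beta}\bigl(\sum_{k}\alpha_{j_{i_k}},\,\sum_{i=1}^n\alpha_{j_i}-\sum_k\alpha_{j_{i_k}}\bigr)$;
\item $(V_1,\ldots,V_m)\sim\operatorname{Dir}(\alpha_{j_{i_1}},\ldots,\alpha_{j_{i_m}})$;
\item $B$ and $(V_1,\ldots,V_m)$ are independent.
\end{itemize}
The cleanest justification is to take $T_{\mathbf i}=\sum_k X_{i_k}$. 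Then $V=(X_{i_1},\ldots,X_{i_m})/T_{\mathbf i}$ is conditionally independent of $(T_{\mathbf i}, \{X_i\}_{i\notin\mathbf i})$, hence of $B=T_{\mathbf i}/S$. This follows from the first part of Lemma~\ref{lem:GD} applied to the subset, together with conditional independence of the $X_i$ given the labels. Homogeneity then gives
\[
\mathbb E[g(W_{i_1},\ldots,W_{i_m})\mid \mathbf Y]=\mathbb E[B\mid\mathbf Y]\,\mathbb E_{\operatorname{Dir}}[g(V)] = \frac{\sum_k\alpha_{j_{i_k}}}{\sum_{i=1}^n\alpha_{j_i}}\,\mathbb E_{\operatorname{Dir}(\alpha_{j_{i_1}},\ldots,\alpha_{j_{i_m}})}[g(V_1,\ldots,V_m)].
\]
This step is the main technical point: justifying the independence of $B$ and $V$ conditionally on the labels. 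Integrability is automatic, since $0\leqslant g(W_{\mathbf i})\leqslant C_g$ by (A2).

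\textbf{Step 2: remove the average over subsets.} Take the expectation over $\mathbf Y$, whose law is the product $\prod_{i=1}^n \pi_{j_i}$. This is invariant under permutations of $(j_1,\ldots,j_n)$. Relabeling the summation indices so that $(i_1,\ldots,i_m)$ becomes $(1,\ldots,m)$ therefore shows that every one of the $\binom nm$ subsets contributes the same amount. The denominator $\sum_{i=1}^n\alpha_{j_i}$ is itself permutation invariant, so it is unaffected by the relabeling. The average $\binom nm^{-1}\sum$ thus collapses to the single term with indices $1,\ldots,m$, giving
\[
\mathbb E[\widehat{I(g)}]=\frac{n}{mC_g}\sum_{j_1,\ldots,j_n=1}^{\ell}\Bigl(\prod_{i=1}^n\pi_{j_i}\Bigr)\frac{\bigl(\sum_{i=1}^m\alpha_{j_i}\bigr)\,\mathbb E_{\operatorname{Dir}(\alpha_{j_1},\ldots,\alpha_{j_m})}[g(V)]}{\sum_{i=1}^n\alpha_{j_i}},
\]
which is the claimed formula.
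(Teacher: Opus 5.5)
Your proof is correct and follows essentially the same route as the paper: homogeneity to pass to $X/S$, the conditional Dirichlet law from Lemma~\ref{lem:GD}, and the aggregation property (scale $B$ independent of the normalized subvector $V$) to pull out $\mathbb{E}[B]=\sum_{i=1}^m\alpha_{j_i}/\sum_{i=1}^n\alpha_{j_i}$. The only difference is the order of steps. The paper first invokes exchangeability of $X_1,\ldots,X_n$ to reduce to the subset $\{1,\ldots,m\}$. You instead condition on the labels and then collapse the $\binom{n}{m}$ subsets using permutation invariance of the product law of the labels, which is a slightly more careful way of doing the same thing.
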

\begin{proof}
By the definition of $\widehat{I(g)}$ and the exchangeability of
$X_1,\ldots,X_n$,
\[
\mathbb{E}[\widehat{I(g)}]
=
\frac1{mC_g}\, 
\mathbb{E}
\left[
\frac{
g(X_1,\ldots,X_m)
}
{
\overline X
}
\right].
\]

Using
$
\overline X=S/n,
\
S=\sum_{i=1}^{n}X_i,
$
we obtain
\[
\mathbb{E}[\widehat{I(g)}]
=
\frac n{mC_g} 
\,
\mathbb{E}
\left[
\frac{
g(X_1,\ldots,X_m)
}
{
S
}
\right].
\]

By assumption (A1),
$
g(X_1,\ldots,X_m)
=
S
g\left(
{X_1}/{S},
\ldots,
{X_m}/{S}
\right),
$
which immediately gives
\[
\mathbb{E}[\widehat{I(g)}]
=
\frac n{mC_g}
\,
\mathbb{E}
\left[
g
\left(
\frac{X_1}{S},
\ldots,
\frac{X_m}{S}
\right)
\right].
\]

Now,
Lemma~\ref{lem:GD} implies
$$
\left(
\frac{X_1}{S},
\ldots,
\frac{X_n}{S}
\right)^\top 
\Bigg|\,
\{
Y_1=j_1,\ldots,Y_n=j_n
\}
\sim
\operatorname{Dir}
(
\alpha_{j_1},
\ldots,
\alpha_{j_n}
),
$$
and therefore, for $(U_1,\ldots,U_n)^\top
\sim
\operatorname{Dir}
(
\alpha_{j_1},
\ldots,
\alpha_{j_n}
)$,
\begin{align*}
\mathbb{E}
\left[
g
\left(
\frac{X_1}{S},
\ldots,
\frac{X_m}{S}
\right)
\middle|
Y_1=j_1,\ldots,Y_n=j_n
\right]
&=
\mathbb{E}_{\operatorname{Dir}
(\alpha_{j_1},\ldots,\alpha_{j_n})}
\!\left[
g(U_1,\ldots,U_m)
\right],
\\[0,2cm]
&=
{\sum_{i=1}^m \alpha_{j_i}\over \sum_{i=1}^n \alpha_{j_i}} 
\,
\mathbb{E}_{\operatorname{Dir}
(\alpha_{j_1},\ldots,\alpha_{j_m})}
\!\left[
g(V_1,\ldots,V_m)
\right],
\end{align*}
where the last equality follows from the aggregation property of the
Dirichlet distribution. Specifically, with
$$
(V_1,\ldots,V_m)^\top 
\stackrel{d}{=}
\left(\frac{U_1}{\sum_{i=1}^m U_i},\ldots,\frac{U_m}{\sum_{i=1}^m U_i}\right)^\top
\sim
\operatorname{Dir}(\alpha_{j_1},\ldots,\alpha_{j_m}),
$$
independently of $\sum_{i=1}^m U_i$, and
$
\mathbb{E}[\sum_{i=1}^m U_i]
=
{\sum_{i=1}^m\alpha_{j_i}}/
{\sum_{i=1}^n\alpha_{j_i}},
$
while $g$ is homogeneous of degree one.

Applying the law of total expectation yields the desired identity, completing the proof.
\end{proof}

Theorem~\ref{thm:Expectation} reveals that the expectation of the estimator depends exclusively on Dirichlet expectations. Consequently, the influence of the Gamma mixture model is completely characterized by the mixture weights and the corresponding shape parameters, while the common rate parameter plays no role. This property mirrors the population representation established in Proposition~\ref{prop:Ig} and makes it possible to derive an exact expression for the finite-sample bias by direct comparison of the two formulas.

The expectations
$
\mathbb{E}_{\operatorname{Dir}(\alpha_{j_1},\ldots,\alpha_{j_m})}
\left[
g(V_1,\ldots,V_m)
\right],
$
where
$
(V_1,\ldots,V_m)^\top
\sim
\operatorname{Dir}(\alpha_{j_1},\ldots,\alpha_{j_m}),
$
required in Theorem~\ref{thm:Expectation} are given explicitly in
Table~\ref{tab:dirichlet_expectations} for the different choices of $g$.

\begin{table}[htb!]
	\centering
	\resizebox{\textwidth}{!}{
		\renewcommand{\arraystretch}{2.2}
		\setlength{\extrarowheight}{2.2pt}
		\begin{tabular}{|c|c|}
			\hline
			\textbf{Name} &
			$\displaystyle
			\mathbb{E}_{\operatorname{Dir}(\alpha_{j_1},\ldots,\alpha_{j_m})}
			[g(V_1,\ldots,V_m)]$
			\\
			\hline

			Gini coefficient
			&
			{$\displaystyle
			1-
			\frac{
				2\left\{B_{1/2}(\alpha_{j_1}+1,\alpha_{j_2})
				+
				B_{1/2}(\alpha_{j_2}+1,\alpha_{j_1})\right\}
			}{
				B(\alpha_{j_1},\alpha_{j_2})
			}$}
			\\
			\hline

			$m$th Gini index
			&
			$\displaystyle
			\frac{
				\Gamma\!\left(\sum_{\ell=1}^{m}\alpha_{j_\ell}\right)
			}{
				\prod_{i=1}^{m}\Gamma(\alpha_{j_i})
			}
			\int_{\Delta_{m-1}}
			(v_{m:m}-v_{1:m})
			\prod_{i=1}^{m}v_i^{\alpha_{j_i}-1}
			\,{\rm d}\boldsymbol{v}$
			\\
			\hline

			Extended $m$th Gini
			&
			$\displaystyle
			\frac{
				\Gamma\!\left(\sum_{\ell=1}^{m}\alpha_{j_\ell}\right)
			}{
				\prod_{i=1}^{m}\Gamma(\alpha_{j_i})
			}
			\int_{\Delta_{m-1}}
			(v_{k:m}-v_{j:m})
			\prod_{i=1}^{m}v_i^{\alpha_{j_i}-1}
			\,{\rm d}\boldsymbol{v}$
			\\
			\hline

			Linear order-statistic (OS) inequality index
			&
			$\displaystyle
			\frac{
				\Gamma\!\left(\sum_{\ell=1}^{m}\alpha_{j_\ell}\right)
			}{
				\prod_{i=1}^{m}\Gamma(\alpha_{j_i})
			}
			\int_{\Delta_{m-1}}
			\left(\sum_{k=1}^{m}a_kv_{k:m}\right)
			\prod_{i=1}^{m}v_i^{\alpha_{j_i}-1}
			\,{\rm d}\boldsymbol{v}$
			\\
			\hline

			Smoothed CV (SCV)
			&
			$\displaystyle
			\frac{
				(\alpha_{j_1}-\alpha_{j_2})^2
				+\alpha_{j_1}+\alpha_{j_2}
			}{
				(\alpha_{j_1}+\alpha_{j_2})
				(\alpha_{j_1}+\alpha_{j_2}+1)
			}$
			\\
			\hline

			Power index $p>0$
			&
			$\displaystyle
			\frac{
				B(\alpha_{j_1}+p,\alpha_{j_2})
				+
				B(\alpha_{j_1},\alpha_{j_2}+p)
			}{
				B(\alpha_{j_1},\alpha_{j_2})
			}$
			\\
			\hline

			Shannon entropy
			&
			$\displaystyle
			\frac{
				\alpha_{j_1}
				\left\{
					\psi(\alpha_{j_1}+\alpha_{j_2}+1)
					-\psi(\alpha_{j_1}+1)
				\right\}
				+
				\alpha_{j_2}
				\left\{
					\psi(\alpha_{j_1}+\alpha_{j_2}+1)
					-\psi(\alpha_{j_2}+1)
				\right\}
			}{
				\alpha_{j_1}+\alpha_{j_2}
			}$
			\\
			\hline

			Product
			&
			$\displaystyle
			\frac{
				\alpha_{j_1}\alpha_{j_2}
			}{
				(\alpha_{j_1}+\alpha_{j_2})
				(\alpha_{j_1}+\alpha_{j_2}+1)
			}$
			\\
			\hline

			Symmetric subcase index
			&
			$\displaystyle
			\frac{
				B(\alpha_{j_1}+\theta,\alpha_{j_2}+\beta)
			}{
				B(\alpha_{j_1},\alpha_{j_2})
			},
			\quad \theta,\beta>0,\ \theta=\beta$
			\\
			\hline

			Maximum
			&
			$\displaystyle
			\frac{
				B(\alpha_{j_1}+1,\alpha_{j_2})
				-
				B_{1/2}(\alpha_{j_1}+1,\alpha_{j_2})
				+
				B_{1/2}(\alpha_{j_1},\alpha_{j_2}+1)
			}{
				B(\alpha_{j_1},\alpha_{j_2})
			}$
			\\
			\hline

			Minimum
			&
			$\displaystyle
			\frac{
				B_{1/2}(\alpha_{j_1}+1,\alpha_{j_2})
				+
				B(\alpha_{j_1},\alpha_{j_2}+1)
				-
				B_{1/2}(\alpha_{j_1},\alpha_{j_2}+1)
			}{
				B(\alpha_{j_1},\alpha_{j_2})
			}$
			\\
			\hline
		\end{tabular}
	}

	\caption{Expressions for
	$\mathbb{E}_{\operatorname{Dir}(\alpha_{j_1},\ldots,\alpha_{j_m})}
	[g(V_1,\ldots,V_m)]$ for the inequality measures considered.
	Here,
	$B(a,b)=\Gamma(a)\Gamma(b)/\Gamma(a+b)$ is the beta function,
	$B_x(a,b)=\int_0^x t^{a-1}(1-t)^{b-1}\,{\rm d}t$ is the incomplete beta
	function, $\Gamma(a)$ is the Gamma function, and
	$\psi(a)=\Gamma'(a)/\Gamma(a)$ is the digamma function.
	Moreover,
	$\Delta_{m-1}=
	\{(v_1,\ldots,v_m)\in(0,1)^m:
	\sum_{i=1}^{m}v_i=1\}$.}
	\label{tab:dirichlet_expectations}
\end{table}

\begin{corollary}\label{cor:Bias}
Under the assumptions of Theorem~\ref{thm:Expectation}, we have
    \begin{multline*}
\operatorname{Bias}(\widehat{I(g)},I(g))
=
\frac{1}{mC_g}
\sum_{j_1,\ldots,j_m=1}^{\ell}
\left(
\prod_{i=1}^{m}\pi_{j_i}
\right) 
\left(
\sum_{i=1}^{m}\alpha_{j_i}
\right)
\mathbb{E}_{\operatorname{Dir}
(\alpha_{j_1},\ldots,\alpha_{j_m})}
\!\left[
g(V_1,\ldots,V_m)
\right]
\\[0,2cm]
\times 
\sum_{j_{m+1},\ldots,j_n=1}^{\ell}
\left(
\prod_{i=m+1}^{n}\pi_{j_i}
\right) 
\Bigg(
{n\over \sum_{i=1}^n \alpha_{j_i}} 
-
\frac1{\sum_{r=1}^{\ell}\pi_r\alpha_r}
\Bigg),
    \end{multline*}
where     
$
(V_1,\ldots,V_m)^\top 
\sim
\operatorname{Dir}
(
\alpha_{j_1},
\ldots,
\alpha_{j_m}
)
$.
\end{corollary}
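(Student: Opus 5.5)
The plan is to subtract the population formula of Proposition~\ref{prop:Ig} from the expectation formula of Theorem~\ref{thm:Expectation}, after first rewriting both over the same index set $\{1,\ldots,\ell\}^n$. To lighten notation, write
\[
A(j_1,\ldots,j_m)=\Bigl(\prod_{i=1}^m\pi_{j_i}\Bigr)\Bigl(\sum_{i=1}^m\alpha_{j_i}\Bigr)\mathbb{E}_{\operatorname{Dir}(\alpha_{j_1},\ldots,\alpha_{j_m})}[g(V_1,\ldots,V_m)].
\]
The key observation is that $A$ depends only on the first $m$ labels, while the remaining labels $j_{m+1},\ldots,j_n$ enter the expectation formula only through the weights $\prod_{i=m+1}^n\pi_{j_i}$ and the denominator $\sum_{i=1}^n\alpha_{j_i}$.

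\textbf{Step 1: factor the expectation.} In Theorem~\ref{thm:Expectation}, split the product $\prod_{i=1}^n\pi_{j_i}$ into $\prod_{i=1}^m\pi_{j_i}\cdot\prod_{i=m+1}^n\pi_{j_i}$ and separate the sum over $(j_1,\ldots,j_n)$ into an outer sum over $(j_1,\ldots,j_m)$ and an inner sum over $(j_{m+1},\ldots,j_n)$. This gives
\[
\mathbb{E}[\widehat{I(g)}]=\frac{1}{mC_g}\sum_{j_1,\ldots,j_m=1}^{\ell}A(j_1,\ldots,j_m)\sum_{j_{m+1},\ldots,j_n=1}^{\ell}\Bigl(\prod_{i=m+1}^n\pi_{j_i}\Bigr)\frac{n}{\sum_{i=1}^n\alpha_{j_i}}.
\]

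\textbf{Step 2: pad the population index.} In Proposition~\ref{prop:Ig}, multiply each term by
\[
1=\sum_{j_{m+1},\ldots,j_n=1}^{\ell}\prod_{i=m+1}^n\pi_{j_i},
\]
which holds because $\sum_r\pi_r=1$. This yields
\[
I(g)=\frac{1}{mC_g}\sum_{j_1,\ldots,j_m}A(j_1,\ldots,j_m)\sum_{j_{m+1},\ldots,j_n}\Bigl(\prod_{i=m+1}^n\pi_{j_i}\Bigr)\frac{1}{\sum_r\pi_r\alpha_r}.
\]

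\textbf{Step 3: subtract.} Taking $\operatorname{Bias}=\mathbb{E}[\widehat{I(g)}]-I(g)$ and combining the two inner sums termwise gives exactly the stated formula.

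No step is a real obstacle; it is bookkeeping. The only point needing care is that the inner factor $n/\sum_{i=1}^n\alpha_{j_i}$ still depends on $j_1,\ldots,j_m$, so it must stay inside the inner sum rather than be pulled out. When $n=m$ the inner sum is empty and should be read as the single empty-product term equal to $1$; the formula remains consistent in that case.
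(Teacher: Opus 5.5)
Your proposal is correct and takes the same route as the paper, which obtains Corollary~\ref{cor:Bias} by direct comparison of Theorem~\ref{thm:Expectation} with Proposition~\ref{prop:Ig}. That is exactly your factorization of the $n$-fold label sum, followed by padding $I(g)$ with $\sum_{j_{m+1},\ldots,j_n=1}^{\ell}\prod_{i=m+1}^{n}\pi_{j_i}=1$ and subtracting. Your care in keeping $n/\sum_{i=1}^n\alpha_{j_i}$ inside the inner sum and your reading of the case $n=m$ as an empty-product term are both right.
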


{
\begin{remark}
The bias in Corollary \ref{cor:Bias} admits the following decomposition:
\begin{align*}
\operatorname{Bias}(\widehat{I(g)},I(g))
&=\frac{1}{mC_g}
\Bigg\{
\mathbb{E}\Bigg[
\left(\sum_{i=1}^{m}\alpha_{Y_i}\right)
\mathbb{E}_{\operatorname{Dir}(\alpha_{Y_1},\ldots,\alpha_{Y_m})}
\left[g(V_1,\ldots,V_m)\right]
\Bigg]
\left(
n\mathbb{E}\left[
\frac{1}{\sum_{i=1}^{n}\alpha_{Y_i}}
\right]
-\frac{1}{\sum_{r=1}^{\ell}\pi_r\alpha_r}
\right)
\\[0,2cm]
&+
n\operatorname{Cov}\Bigg(
\left(\sum_{i=1}^{m}\alpha_{Y_i}\right)
\mathbb{E}_{\operatorname{Dir}(\alpha_{Y_1},\ldots,\alpha_{Y_m})}
\left[g(V_1,\ldots,V_m)\right],
\frac{1}{\sum_{i=1}^{n}\alpha_{Y_i}}
\Bigg)
\Bigg\}.
\end{align*}
Since the function $x\mapsto 1/x$ is convex on $(0,\infty)$, Jensen's inequality gives
\[
\mathbb{E}\left[
\frac{1}{\sum_{i=1}^{n}\alpha_{Y_i}}
\right]
\geqslant
\frac{1}{\mathbb{E}\left[\sum_{i=1}^{n}\alpha_{Y_i}\right]}
=
\frac{1}{n\sum_{r=1}^{\ell}\pi_r\alpha_r}.
\]
Hence,
\[
n\mathbb{E}\left[
\frac{1}{\sum_{i=1}^{n}\alpha_{Y_i}}
\right]
-\frac{1}{\sum_{r=1}^{\ell}\pi_r\alpha_r}
\geqslant 0.
\]
Moreover, since $g\geqslant 0$, the expectation multiplying this nonnegative quantity is also nonnegative. Therefore, the first contribution to the bias is nonnegative. The sign of the bias as a whole, however, depends on the covariance term. In particular, a nonnegative covariance guarantees a nonnegative bias, whereas a negative covariance does not necessarily imply a negative bias: it must be sufficiently negative to offset the first contribution. Thus, the sign of the bias cannot, in general, be determined solely from the nonnegativity of $g$ or from Jensen's inequality.
\end{remark}
}


\begin{corollary}[Exact unbiasedness for Gamma populations]
\label{cor:gamma}
Suppose that the finite Gamma mixture reduces to a single Gamma distribution, namely,
$
\ell=1,
\
X\sim\text{Gamma}(\alpha,\lambda).
$
Then,
$
\mathbb{E}[\widehat{I(g)}]=I(g),
\ 
n\geqslant m,
$
or, equivalently,
\[
\operatorname{Bias}(\widehat{I(g)},I(g))=0.
\]
Hence, the U-statistic estimator is exactly unbiased for every NSII generated by a nonnegative measurable symmetric function satisfying {\rm(A1)}--{\rm(A2)}.
\end{corollary}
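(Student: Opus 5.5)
The plan is to specialize Corollary~\ref{cor:Bias} to $\ell=1$ and check that the bracketed factor vanishes identically. With $\ell=1$ we have $\pi_1=1$ and $\alpha_1=\alpha$. The outer sum over $j_1,\ldots,j_m$ and the inner sum over $j_{m+1},\ldots,j_n$ then each reduce to the single index choice $j_i\equiv 1$, and every product of weights equals one. The inner factor becomes
\[
\frac{n}{\sum_{i=1}^n \alpha_{j_i}}-\frac{1}{\sum_{r=1}^{\ell}\pi_r\alpha_r}
=\frac{n}{n\alpha}-\frac{1}{\alpha}=0 .
\]
So the bias is zero for every $n\geqslant m$.

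The only point to verify is that the outer factor, $m\alpha\,\mathbb{E}_{\operatorname{Dir}(\alpha,\ldots,\alpha)}[g(V_1,\ldots,V_m)]$, is finite, so that we are not facing a product of the form $\infty\cdot 0$. By (A2), $0\leqslant g(\boldsymbol v)\leqslant C_g\sum_i v_i=C_g$ on the simplex, so this expectation lies in $[0,C_g]$. This bound also justifies all the conditional expectations used in Theorem~\ref{thm:Expectation}.

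As a cross-check, I would compare Theorem~\ref{thm:Expectation} directly with Proposition~\ref{prop:Ig} in the case $\ell=1$. Both reduce to
\[
\frac{1}{mC_g\alpha}\,(m\alpha)\,\mathbb{E}_{\operatorname{Dir}(\alpha,\ldots,\alpha)}[g].
\]
In the estimator's expectation this arises as $\frac{n}{mC_g}\cdot\frac{m\alpha}{n\alpha}\,\mathbb{E}[g]$, which gives $\mathbb{E}[\widehat{I(g)}]=I(g)$, equivalently zero bias.

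I expect no real obstacle; the only care needed is the finiteness remark above. The final sentence of the statement then follows because $g$ was an arbitrary nonnegative measurable symmetric function satisfying (A1)--(A2).
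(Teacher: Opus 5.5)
Your proposal is correct and matches the paper's proof. Like the paper, you specialize Corollary~\ref{cor:Bias} to $\ell=1$ (so $j_1=\cdots=j_n=1$, $\pi_1=1$, $\alpha_1=\alpha$) and observe that the factor $n/(n\alpha)-1/\alpha$ vanishes; your finiteness check via (A2) and your cross-check comparing Theorem~\ref{thm:Expectation} with Proposition~\ref{prop:Ig} are valid additions that the paper leaves implicit.
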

\begin{proof}
For $\ell=1$, we have $j_1=\cdots=j_n=1$, $\pi_1=1$ and $\alpha_1=\alpha$. Hence, 
\[
\begin{aligned}
{n\over \sum_{i=1}^n \alpha_{j_i}} 
-
\frac1{\sum_{r=1}^{\ell}\pi_r\alpha_r}
=
\frac{n}{n\alpha_1}
-
\frac{1}{\alpha_1}
=
0.
\end{aligned}
\]
From Corollary \ref{cor:Bias} it follows that $\operatorname{Bias}(\widehat{I(g)},I(g))=0$.

This completes the proof.
\end{proof}

The result in Corollary~\ref{cor:gamma} was first established in \cite{Vila2026NSII}. The corollary shows that Gamma distributions occupy a distinguished position within the class of finite Gamma mixtures. Although finite mixtures generally induce finite-sample bias, this bias vanishes identically when the mixing distribution degenerates to a single Gamma component. Consequently, the exact unbiasedness established in the Gamma case can be interpreted as a structural property arising from the Gamma--Dirichlet decomposition rather than as an isolated feature of a particular inequality measure.

\subsection{Strong consistency and asymptotic normality}

The representation of $\widehat{I(g)}$ as a ratio involving a $U$-statistic
allows us to establish its strong consistency and asymptotic normality under
the finite Gamma mixture model

\begin{theorem}\label{thm:strong-consistency}
Under the assumptions of Theorem~\ref{thm:Expectation},
$
\widehat{I(g)}
\xrightarrow{\mathrm{a.s.}}
I(g),
\ n\to\infty,
$
where $\xrightarrow{\mathrm{a.s.}}$ denotes almost sure convergence.
Thus, $\widehat{I(g)}$ is a strongly consistent estimator of $I(g)$
\end{theorem}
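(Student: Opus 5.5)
The plan is to handle the numerator and the denominator of \eqref{eq:Tn} separately and then combine them with the continuous mapping theorem. The numerator is the $U$-statistic $U_n(g)$ in \eqref{eq:Us}, whose kernel $g$ is symmetric and measurable. We will invoke Hoeffding's strong law of large numbers for $U$-statistics: if $\mathbb{E}|g(X_1,\ldots,X_m)|<\infty$, then $U_n(g)\xrightarrow{\mathrm{a.s.}}\mathbb{E}[g(X_1,\ldots,X_m)]$. The denominator $\overline X$ converges almost surely to $\mu$ by Kolmogorov's strong law, since $\mathbb{E}[X]=\mu=(1/\lambda)\sum_{r}\pi_r\alpha_r<\infty$.

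The first step is to check integrability of the kernel. This follows directly from (A2). Since $0\leqslant g(x_1,\ldots,x_m)\leqslant C_g\sum_{i=1}^m x_i$, we obtain $\mathbb{E}|g(X_1,\ldots,X_m)|\leqslant mC_g\mu<\infty$. Alternatively, the explicit representation in Proposition~\ref{prop:Ig} shows that $\mathbb{E}[g(X_1,\ldots,X_m)]$ is a finite sum of finite terms. Either argument gives the hypothesis needed for the $U$-statistic SLLN, which yields $U_n(g)\to\mathbb{E}[g(X_1,\ldots,X_m)]$ almost surely.

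The second step is to combine the two limits. Because $X_i>0$ almost surely, we have $\overline X>0$ for every $n$, and the limit $\mu$ is strictly positive. The map $(u,v)\mapsto u/(mC_g v)$ is continuous on $[0,\infty)\times(0,\infty)$. On the intersection of the two probability-one events, we therefore get
\[
\widehat{I(g)}=\frac{U_n(g)}{mC_g\overline X}\longrightarrow\frac{\mathbb{E}[g(X_1,\ldots,X_m)]}{mC_g\mu}=I(g),
\]
and this intersection is itself an event of probability one.

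The only point needing care is the justification of the $U$-statistic strong law; the rest is routine. For this I would cite Hoeffding (1961), or the reverse-martingale argument in Serfling or Lee, under the first-moment condition only. That condition is guaranteed here precisely by the linear domination in (A2). No Gamma-specific structure is needed, and Lemma~\ref{lem:GD} is used at most to express the limit in the form of Proposition~\ref{prop:Ig}.
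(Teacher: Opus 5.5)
Your proposal is correct and follows essentially the same route as the paper's proof. Both arguments get integrability of $g(X_1,\ldots,X_m)$ from the linear bound in (A2), apply the strong law for $U$-statistics to the numerator and the ordinary strong law to $\overline X$, and conclude by continuity of the ratio at $(\mathbb{E}[g(X_1,\ldots,X_m)],\mu)$ with $\mu>0$.
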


\begin{proof}
By (A2),
$
0\leqslant g(x_1,\ldots,x_m)
\leqslant C_g\sum_{i=1}^m x_i,
$
and hence $g(X_1,\ldots,X_m)$ is integrable under the finite Gamma mixture
model. Therefore, the strong law of large numbers for $U$-statistics \citep{Lee1990,Henze2024} gives
\begin{align*}
U_n(g)
\xrightarrow{\mathrm{a.s.}}
\mathbb{E}[g(X_1,\ldots,X_m)],
\end{align*}
where $U_n(g)$ is the  $U$-statistic defined in \eqref{eq:Us}.
Moreover, since $X_1$ has finite mean,
$
\overline X
\xrightarrow{\mathrm{a.s.}}
\mathbb{E}[X_1]=\mu>0.
$
Consequently,
\[
\widehat{I(g)}
\stackrel{\eqref{eq:Tn}}{=}
\frac{U_n(g)}{mC_g\overline X}
\xrightarrow{\mathrm{a.s.}}
\frac{\mathbb{E}[g(X_1,\ldots,X_m)]}
{mC_g\mu}
=
I(g).
\]
\end{proof}

\begin{theorem}\label{thm:asymptotic-normality}
Under the assumptions of Theorem~\ref{thm:Expectation},
\[
\sqrt{n}
\left\{
\widehat{I(g)}-I(g)
\right\}
\xrightarrow{\mathscr{D}}
N
\left(
0,\frac{\sigma_g^2}{C_g^2\mu^2}
+
\frac{\theta_g^2\sigma_X^2}
{m^2C_g^2\mu^4}
-
\frac{2\theta_g\tau_g}
{mC_g^2\mu^3}
\right),
\quad n\to\infty,
\]
where $\xrightarrow{\mathscr{D}}
$ denotes convergence in distribution,
$
\theta_g
=
\mathbb{E}[g(X_1,\ldots,X_m)],
\ 
\sigma_g^2
=
\operatorname{Var}
\left(
\mathbb{E}[g(X_1,\ldots,X_m)\mid X_1]
\right),
\ 
\tau_g
=
\operatorname{Cov}
\left(
\mathbb{E}[g(X_1,\ldots,X_m)\mid X_1],
X_1
\right),
$
and
$
\sigma_X^2=\operatorname{Var}(X_1).
$
\end{theorem}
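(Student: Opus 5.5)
The plan is to combine a joint central limit theorem for the pair $(U_n(g),\overline X)$ with the delta method applied to the map $h(u,x)=u/(mC_g x)$. Since $\widehat{I(g)}=h(U_n(g),\overline X)$ by \eqref{eq:Tn} and $I(g)=h(\theta_g,\mu)$, asymptotic normality of $\widehat{I(g)}$ follows once $\sqrt n\,(U_n(g)-\theta_g,\ \overline X-\mu)^\top$ is shown to be asymptotically bivariate normal with an explicitly identified covariance matrix.

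\textbf{Moment conditions.} By (A2), $0\leqslant g(x_1,\ldots,x_m)\leqslant C_g\sum_{i=1}^m x_i$, so $\mathbb{E}[g(X_1,\ldots,X_m)^2]\leqslant C_g^2 m\sum_{i}\mathbb{E}[X_i^2]<\infty$. This uses the fact that the Gamma mixture \eqref{eq:gamma-mixture} has finite moments of all orders. Hence the kernel is square integrable and $\sigma_X^2<\infty$.

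\textbf{Hoeffding projection.} Write $g_1(x)=\mathbb{E}[g(x,X_2,\ldots,X_m)]$, so that $g_1(X_1)=\mathbb{E}[g(X_1,\ldots,X_m)\mid X_1]$. The standard Hoeffding decomposition \citep{Lee1990} gives
\[
\sqrt n\,\{U_n(g)-\theta_g\}=\frac{m}{\sqrt n}\sum_{i=1}^n\{g_1(X_i)-\theta_g\}+o_P(1),
\]
because the remainder has variance $O(n^{-1})$ under square integrability. Consequently,
\[
\sqrt n\begin{pmatrix}U_n(g)-\theta_g\\ \overline X-\mu\end{pmatrix}=\frac{1}{\sqrt n}\sum_{i=1}^n\begin{pmatrix}m\{g_1(X_i)-\theta_g\}\\ X_i-\mu\end{pmatrix}+o_P(1).
\]
The summands are iid, centered and have finite second moments. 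The multivariate CLT, combined with Slutsky's lemma, then yields a limit $N_2(\mathbf 0,\Sigma)$ with
\[
\Sigma=\begin{pmatrix} m^2\sigma_g^2 & m\tau_g\\ m\tau_g & \sigma_X^2\end{pmatrix}.
\]

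\textbf{Delta method.} Since $\mu>0$, the map $h$ is differentiable at $(\theta_g,\mu)$, with gradient
\[
\nabla h=\left(\frac{1}{mC_g\mu},\ -\frac{\theta_g}{mC_g\mu^2}\right)^\top.
\]
The limiting variance is therefore
\[
\nabla h^\top\Sigma\,\nabla h=\frac{m^2\sigma_g^2}{m^2C_g^2\mu^2}+\frac{\theta_g^2\sigma_X^2}{m^2C_g^2\mu^4}-\frac{2m\theta_g\tau_g}{m^2C_g^2\mu^3},
\]
which simplifies exactly to the expression in the statement.

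\textbf{Main obstacle.} The only delicate step is the joint CLT, specifically checking that the Hoeffding remainder is negligible jointly with $\overline X$. I would handle this through the $L^2$ bound $\operatorname{Var}\big(U_n(g)-\theta_g-\tfrac{m}{n}\sum_{i}\{g_1(X_i)-\theta_g\}\big)=O(n^{-2})$, which is standard given $\mathbb{E}[g^2]<\infty$. After that, Slutsky's lemma transfers the CLT for the iid sum to the pair $(U_n(g),\overline X)$.
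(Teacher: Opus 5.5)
Your proposal is correct and follows the paper's proof: it uses the same joint CLT for $(U_n(g),\overline X)$ with the same covariance matrix $\Sigma$, and the delta method with $h(u,v)=u/(mC_g v)$ and the same gradient. Your explicit Hoeffding-projection bound, with an $O(n^{-2})$ remainder and Slutsky's lemma, justifies the joint convergence that the paper simply attributes to the multivariate central limit theorem.
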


\begin{proof}
By (A2),
$
0\leqslant g(x_1,\ldots,x_m)
\leqslant C_g\sum_{i=1}^m x_i.
$
Since the finite Gamma mixture has finite moments of all orders,
$
\mathbb{E}[g^2(X_1,\ldots,X_m)]<\infty.
$
Hence, the asymptotic normality of $U$-statistics \citep[Theorem 7.3 of][]{Hoeffding1948} yields
\[
\sqrt{n}
\left\{
U_n(g)-\theta_g
\right\}
\xrightarrow{\mathscr{D}}
N(0,m^2\sigma_g^2),
\quad n\to\infty,
\]
where $U_n(g)$ is as defined in \eqref{eq:Us}.
Moreover, the multivariate central limit theorem gives the joint
convergence
\[
\sqrt{n}
\begin{pmatrix}
U_n(g)-\theta_g
\\[0,2cm]
\overline X-\mu
\end{pmatrix}
\xrightarrow{\mathscr{D}}
N_2(\boldsymbol{0},\boldsymbol{\Sigma}),
\quad 
\boldsymbol{\Sigma}
=
\begin{pmatrix}
m^2\sigma_g^2&m\tau_g
\\[0,2cm]
m\tau_g&\sigma_X^2
\end{pmatrix}.
\]

Consider
$
h(u,v)={u}/({mC_gv}).
$
Since $\mu>0$, $h$ is continuously differentiable in a neighborhood of
$(\theta_g,\mu)^\top$, with gradient
$
\nabla h(\theta_g,\mu)
=
(
{1}/({mC_g\mu}),
-{\theta_g}/({mC_g\mu^2})
)^{\!\top}.
$
Therefore, by the multivariate delta method,
\[
\sqrt{n}
\left\{
h(U_n(g),\overline{X})-h(\theta_g,\mu)
\right\}
\stackrel{\eqref{eq:Tn}}{=}
\sqrt{n}
\left\{
\widehat{I(g)}-I(g)
\right\}
\xrightarrow{\mathscr{D}}
N(0,\sigma_I^2),
\quad 
n\to\infty,
\]
where
$
\sigma_I^2
=
[\nabla h(\theta_g,\mu)]^\top
\boldsymbol{\Sigma}
\nabla h(\theta_g,\mu).
$
\end{proof}

\subsection{Asymptotic behavior of the bias}

The exact bias expression in Corollary~\ref{cor:Bias} enables us to characterize its asymptotic behavior as $n\to\infty$. In particular, the law of large numbers, together with the dominated convergence theorem, establishes that the bias converges to zero as the sample size increases.

\begin{corollary}\label{cor:asymptotic-unbiasedness}
Under the assumptions of Corollary~\ref{cor:Bias},
$
\lim_{n\to\infty}
\operatorname{Bias}(\widehat{I(g)},I(g))=0,
$
that is, $\widehat{I(g)}$ is asymptotically unbiased as
$n\to\infty$.
\end{corollary}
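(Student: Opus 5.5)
Rewrite the exact bias from Corollary~\ref{cor:Bias} in probabilistic form. Define the bounded function
\[
h(Y_1,\ldots,Y_m)\coloneqq\Bigl(\sum_{i=1}^{m}\alpha_{Y_i}\Bigr)\mathbb{E}_{\operatorname{Dir}(\alpha_{Y_1},\ldots,\alpha_{Y_m})}[g(V_1,\ldots,V_m)].
\]
Then Corollary~\ref{cor:Bias} reads
\[
\operatorname{Bias}(\widehat{I(g)},I(g))=\frac{1}{mC_g}\,\mathbb{E}\Bigl[h(Y_1,\ldots,Y_m)\Bigl(\frac{n}{\sum_{i=1}^{n}\alpha_{Y_i}}-\frac{1}{\bar\alpha}\Bigr)\Bigr],
\qquad \bar\alpha\coloneqq\sum_{r=1}^{\ell}\pi_r\alpha_r .
\]
By (A2), $0\leqslant g(V_1,\ldots,V_m)\leqslant C_g\sum_i V_i=C_g$ on the simplex. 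Hence $0\leqslant h\leqslant C_g\, m\,\alpha_{\max}$, where $\alpha_{\max}=\max_r\alpha_r$.

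It therefore suffices to show
\[
\mathbb{E}\Bigl|\frac{n}{\sum_{i=1}^{n}\alpha_{Y_i}}-\frac{1}{\bar\alpha}\Bigr|\to 0 .
\]
The $Y_i$ are iid and $\alpha_{Y_i}$ is bounded with mean $\bar\alpha$, so the strong law of large numbers gives $n^{-1}\sum_{i=1}^n\alpha_{Y_i}\to\bar\alpha$ almost surely. Consequently
\[
\frac{n}{\sum_{i=1}^n\alpha_{Y_i}}\to\frac{1}{\bar\alpha}\quad\text{almost surely.}
\]
For domination, $\sum_{i=1}^n\alpha_{Y_i}\geqslant n\alpha_{\min}$ with $\alpha_{\min}=\min_r\alpha_r>0$. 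The integrand is thus bounded by $1/\alpha_{\min}+1/\bar\alpha$, a constant independent of $n$.

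Dominated convergence then yields convergence in $L^1$. Multiplying by the bounded factor $h$ gives
\[
|\operatorname{Bias}|\leqslant \frac{m\alpha_{\max}}{m}\,\mathbb{E}\Bigl|\frac{n}{\sum_{i=1}^n\alpha_{Y_i}}-\frac{1}{\bar\alpha}\Bigr|\to0 .
\]
A subtle point is that $h$ depends on $Y_1,\ldots,Y_m$, which also appear in the sum. This causes no difficulty, since we bound $h$ by a constant rather than factor the expectation. Also, $m$ is fixed while $n\to\infty$.

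The main (mild) obstacle is making sure the rewriting of Corollary~\ref{cor:Bias} as a single expectation over $(Y_1,\ldots,Y_n)$ is correct. The inner sum over $j_{m+1},\ldots,j_n$ is exactly the conditional expectation given $Y_1,\ldots,Y_m$, so the joint sum equals $\mathbb{E}[h\cdot(\cdot)]$ by the tower property. The other point to check is the uniform lower bound on $\sum\alpha_{Y_i}$, which holds because there are finitely many components, each with positive shape.
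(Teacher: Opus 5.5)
Your proof is correct and follows essentially the same route as the paper: the strong law of large numbers applied to $n/\sum_{i=1}^n\alpha_{Y_i}$, then dominated convergence using the bound $1/\alpha_{\min}$. The only difference is cosmetic. The paper fixes $(j_1,\ldots,j_m)$ and uses that the bias is a finite sum over these configurations, whereas you bound the prefactor $h$ uniformly by $C_g m\alpha_{\max}$ and take a single $L^1$ limit.
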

\begin{proof}
Let
$
\mu_\alpha=\sum_{r=1}^{\ell}\pi_r\alpha_r
\ \text{and}\ 
\alpha_{\min}=\min_{1\leqslant r\leqslant\ell}\alpha_r>0.
$
Since $\mathbb{P}(Y_i=j)=\pi_j$, the random variables
$\alpha_{Y_1},\alpha_{Y_2},\ldots$ are iid, with
$
\mathbb{E}[\alpha_{Y_i}]=\mu_\alpha.
$
By the strong law of large numbers and the continuous mapping
theorem,
\[
\frac{n}{\sum_{i=1}^{n}\alpha_{Y_i}}
\xrightarrow{\mathrm{a.s.}}
\frac{1}{\mu_\alpha}, \quad n\to\infty.
\]

Fix any $(j_1,\ldots,j_m)^\top\in\{1,\ldots,\ell\}^m$.
Since the remaining random variables are independent of
$Y_1,\ldots,Y_m$, the strong law of large numbers yields
\[
\frac{n}{
\sum_{i=1}^{m}\alpha_{j_i}
+\sum_{i=m+1}^{n}\alpha_{Y_i}}
\xrightarrow{\mathrm{a.s.}}
\frac{1}{\mu_\alpha}.
\]
Moreover, for every $n\geqslant m$,
\[
0<
\frac{n}{
\sum_{i=1}^{m}\alpha_{j_i}
+\sum_{i=m+1}^{n}\alpha_{Y_i}}
\leqslant \frac{1}{\alpha_{\min}}.
\]
Hence, by the dominated convergence theorem,
\[
\begin{aligned}
&\mathbb{E}\left[
\frac{n}{
\sum_{i=1}^{m}\alpha_{j_i}
+\sum_{i=m+1}^{n}\alpha_{Y_i}}
-\frac{1}{\mu_\alpha}
\right]
\longrightarrow 0, \quad n\to\infty.
\end{aligned}
\]
Equivalently, for every fixed $(j_1,\ldots,j_m)^\top$,
\[
\begin{aligned}
&\sum_{j_{m+1},\ldots,j_n=1}^{\ell}
\left(\prod_{i=m+1}^{n}\pi_{j_i}\right)
\left(
\frac{n}{\sum_{i=1}^{n}\alpha_{j_i}}
-\frac{1}{\mu_\alpha}
\right)
\longrightarrow 0, \quad n\to\infty.
\end{aligned}
\]

By Corollary~\ref{cor:Bias}, the bias is a finite sum over
$j_1,\ldots,j_m$, whose remaining factors do not depend on $n$
and are finite under the stated assumptions. Since each
corresponding inner weighted sum converges to zero, we conclude
that
$
\lim_{n\to\infty}
\operatorname{Bias}(\widehat{I(g)},I(g))=0.
$
This completes the proof.
\end{proof}

\section{Exact variance and MSE of NSIIs under Gamma mixture models}
\label{sec:variance}

We now derive exact finite-sample expressions for the variance and
mean squared error (MSE) of the estimator $\widehat{I(g)}$ in \eqref{eq:Tn}. The
derivation uses the Gamma--Dirichlet representation established in
Lemma~\ref{lem:GD}. In contrast to the expectation, the second moment
involves pairs of subsets of size $m$, which are classified according
to the size of their intersection.

For $A,B\subset\{1,\ldots,n\}$ with $|A|=|B|=m$, let
$
q=|A\cap B|,
\ 
q_0=\max\{0,2m-n\},
$
so that $q_0\leqslant q\leqslant m$. Define
$
d_q=2m-q
$
and
\begin{equation}
\label{eq:Hq}
H_q\coloneqq
\mathbb E\left[
g(U_1^*,\ldots,U_m^*)
g(U_1^*,\ldots,U_q^*,U_{m+1}^*,\ldots,U_{d_q}^*)
\right],
\end{equation}
where
\[
(U_1^*,\ldots,U_n^*)^\top
=
\left(
\frac{X_1}{S},\ldots,\frac{X_n}{S}
\right)^\top,
\quad
S=\sum_{i=1}^nX_i.
\]

\begin{proposition}
\label{prop:second-moment}
Under the assumptions of Theorem~\ref{thm:Expectation},
\[
\begin{aligned}
\mathbb E[\widehat{I(g)}^2]
=
\frac{n^2}
{m^2C_g^2\binom{n}{m}}
\sum_{q=q_0}^{m}
\binom{m}{q}
\binom{n-m}{m-q}
H_q,
\end{aligned}
\]
where $H_q$ is as defined in \eqref{eq:Hq}.
\end{proposition}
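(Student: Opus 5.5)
Write $\widehat{I(g)}=U_n(g)/(mC_g\overline X)$ with $\overline X=S/n$. Squaring gives
\[
\widehat{I(g)}^2=\frac{n^2}{m^2C_g^2\binom{n}{m}^2}\sum_{A}\sum_{B}\frac{g(\boldsymbol X_A)\,g(\boldsymbol X_B)}{S^2},
\]
where $A,B$ range over $m$-subsets of $\{1,\ldots,n\}$ and $\boldsymbol X_A$ denotes the subvector indexed by $A$ (in increasing order; by symmetry of $g$ the order is irrelevant). By (A1), $g(\boldsymbol X_A)=S\,g(\boldsymbol X_A/S)$, and likewise for $B$. The factor $S^2$ therefore cancels exactly, and each summand becomes $g(\boldsymbol U^*_A)\,g(\boldsymbol U^*_B)$ with $\boldsymbol U^*=(X_1/S,\ldots,X_n/S)^\top$. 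So the whole problem reduces to computing $\mathbb E[g(\boldsymbol U^*_A)g(\boldsymbol U^*_B)]$.

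The key step is to show that this expectation depends on $(A,B)$ only through $q=|A\cap B|$. The $X_i$ are iid, so for any permutation $\sigma$ of $\{1,\ldots,n\}$ the vector $(X_{\sigma(1)},\ldots,X_{\sigma(n)})$ has the same law as $(X_1,\ldots,X_n)$. Since $S$ is permutation invariant, $\boldsymbol U^*$ is exchangeable. Given $A,B$ with $|A\cap B|=q$, choose $\sigma$ mapping $A\cap B$ onto $\{1,\ldots,q\}$, $A\setminus B$ onto $\{q+1,\ldots,m\}$, and $B\setminus A$ onto $\{m+1,\ldots,d_q\}$. This is possible because $d_q=2m-q=|A\cup B|\le n$. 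Using exchangeability together with the symmetry of $g$, the expectation equals $H_q$ in \eqref{eq:Hq}. Integrability is immediate: by (A2), $0\le g(\boldsymbol U^*_A)\le C_g\sum_{i\in A}U_i^*\le C_g$, so each product is bounded by $C_g^2$ and all expectations are finite.

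It remains to count. For fixed $A$, the number of $B$ with $|A\cap B|=q$ is $\binom{m}{q}\binom{n-m}{m-q}$, which is nonzero only when $m-q\le n-m$, that is, $q\ge q_0$. Summing over the $\binom nm$ choices of $A$ cancels one factor of $\binom nm$ in the denominator. Taking expectations then gives
\[
\mathbb E[\widehat{I(g)}^2]=\frac{n^2}{m^2C_g^2\binom nm}\sum_{q=q_0}^m\binom mq\binom{n-m}{m-q}H_q,
\]
as claimed. The only delicate point is the exchangeability reduction to $H_q$. It is not conditional on the latent labels: exchangeability holds for the unconditional law of the mixture sample, so no Dirichlet computation is needed at this stage. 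Lemma~\ref{lem:GD} only enters later, if one wants to evaluate $H_q$ explicitly by conditioning on $Y_1,\ldots,Y_{d_q}$.
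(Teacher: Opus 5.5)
Your proposal is correct and follows essentially the same route as the paper. Like the paper, you use (A1) to write $\widehat{I(g)}$ as a normalized sum of $g(\boldsymbol U^*_A)$, expand the square over pairs of $m$-subsets, reduce each term to $H_q$ by exchangeability and the symmetry of $g$, and count pairs via $\binom{m}{q}\binom{n-m}{m-q}$; your explicit permutation construction and the bound $0\le g(\boldsymbol U^*_A)\le C_g$ simply spell out details the paper leaves implicit.
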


\begin{proof}
By the definition of $\widehat{I(g)}$ and assumption (A1),
\[
\widehat{I(g)}
=
\frac{n}
{mC_g\binom{n}{m}}
\sum_{A\in\mathcal A_{n,m}}g(U_A),
\]
where
$
\mathcal A_{n,m}
=
\{A\subset\{1,\ldots,n\}:|A|=m\}.
$
Hence,
\[
\mathbb E[\widehat{I(g)}^2]
=
\frac{n^2}
{m^2C_g^2\binom{n}{m}^2}
\sum_{A,B\in\mathcal A_{n,m}}
\mathbb E[g(U_A)g(U_B)].
\]

For a fixed $A$, the number of subsets $B$ satisfying
$|A\cap B|=q$ is
$
\binom{m}{q}\binom{n-m}{m-q}.
$
By the exchangeability of $X_1,\ldots,X_n$ and the symmetry of $g$,
the corresponding expectation depends on $(A,B)^\top$ only through
$q=|A\cap B|$. Thus, we may take
$
A=\{1,\ldots,m\},
\
B=\{1,\ldots,q,m+1,\ldots,2m-q\},
$
which gives $H_q$. Summing over $q$ yields the result.
\end{proof}

By Lemma~\ref{lem:GD}, 
$$(U_1^*,\ldots,U_n^*)^\top\mid \{Y_1=j_1,\ldots,Y_n=j_n\}\sim \operatorname{Dir}(\alpha_{j_1},\ldots,\alpha_{j_n}).$$
Therefore, for $(U_1,\ldots,U_n)^\top\sim \operatorname{Dir}(\alpha_{j_1},\ldots,\alpha_{j_n})$, $H_q$ in \eqref{eq:Hq} can be written as
\begin{equation*}
\begin{aligned}
H_q
=
\sum_{j_1,\ldots,j_n=1}^{\ell}
\left(\prod_{i=1}^n\pi_{j_i}\right)
\mathbb E_{\operatorname{Dir}(\alpha_{j_1},\ldots,\alpha_{j_n})}
\Big[
g(U_1,\ldots,U_m)
g(U_1,\ldots,U_q,U_{m+1},\ldots,U_{d_q})
\Big].
\end{aligned}
\end{equation*}

Since
\begin{multline*}
\mathbb E_{\operatorname{Dir}(\alpha_{j_1},\ldots,\alpha_{j_n})}
\Big[
g(U_1,\ldots,U_m)
g(U_1,\ldots,U_q,U_{m+1},\ldots,U_{d_q})
\Big]
\\[0,2cm]
=
\frac{
\left(\sum_{i=1}^{d_q}\alpha_{j_i}\right)
\left(\sum_{i=1}^{d_q}\alpha_{j_i}+1\right)
}{
\left(\sum_{i=1}^{n}\alpha_{j_i}\right)
\left(\sum_{i=1}^{n}\alpha_{j_i}+1\right)
} \, 
\mathbb E_{\operatorname{Dir}(\alpha_{j_1},\ldots,\alpha_{j_{d_q}})}
\Big[
g(V_1,\ldots,V_m)
g(V_1,\ldots,V_q,V_{m+1},\ldots,V_{d_q})
\Big],
\end{multline*}
because the Dirichlet aggregation property implies that
$
\sum_{i=1}^{d_q}U_i
\sim
\operatorname{Beta}(
\sum_{i=1}^{d_q}\alpha_{j_i},
\sum_{i=d_q+1}^{n}\alpha_{j_i}
)
$
and
$$
(V_1,\ldots,V_{d_q})^\top
\stackrel{d}{=}
\left(
\frac{U_1}{\sum_{i=1}^{d_q}U_i},\ldots,
\frac{U_{d_q}}{\sum_{i=1}^{d_q}U_i}
\right)^\top
\sim
\operatorname{Dir}(\alpha_{j_1},\ldots,\alpha_{j_{d_q}})
$$
independently of $\sum_{i=1}^{d_q}U_i$, 
and, 
$
\mathbb{E}[
\{\sum_{i=1}^{d_q}U_i\}^2
]
=
{
(\sum_{i=1}^{d_q}\alpha_{j_i})
(\sum_{i=1}^{d_q}\alpha_{j_i}+1)
}/[{
\left(\sum_{i=1}^{n}\alpha_{j_i}\right)
\left(\sum_{i=1}^{n}\alpha_{j_i}+1\right)
}],
$
while $g$ is homogeneous of degree one.
Consequently, we can write $H_q$ as
\begin{multline}
\label{eq:Hq-mixture}
\!\!\!\!\!
H_q
= \!\!\!
\sum_{j_1,\ldots,j_{d_q}=1}^{\ell}
\left(\prod_{i=1}^{d_q}\pi_{j_i}\right)
\left(\sum_{i=1}^{d_q}\alpha_{j_i}\right)
\left(\sum_{i=1}^{d_q}\alpha_{j_i}+1\right)
\mathbb E_{\operatorname{Dir}(\alpha_{j_1},\ldots,\alpha_{j_{d_q}})}
\Big[
g(V_1,\ldots,V_m)
g(V_1,\ldots,V_q,V_{m+1},\ldots,V_{d_q})
\Big]
\\[0,2cm]
\times
\sum_{j_{d_q+1},\ldots,j_n=1}^{\ell}
\left(\prod_{i=d_q+1}^{n}\pi_{j_i}\right)
\frac{
1
}{
\left(\sum_{i=1}^{n}\alpha_{j_i}\right)
\left(\sum_{i=1}^{n}\alpha_{j_i}+1\right)
}.
\end{multline}

\begin{remark}
For any generator $g$, the expectation in \eqref{eq:Hq-mixture}
admits the general representation
\begin{multline*}
\mathbb{E}_{\operatorname{Dir}(\alpha_{j_1},\ldots,\alpha_{j_{d_q}})}
\Big[
g(V_1,\ldots,V_m)
g(V_1,\ldots,V_q,V_{m+1},\ldots,V_{d_q})
\Big]
\\[0,2cm]
=
\frac{
\Gamma\!\left(\sum_{i=1}^{d_q}\alpha_{j_i}\right)
}{
\prod_{i=1}^{d_q}\Gamma(\alpha_{j_i})
}
\int_{\Delta_{d_q-1}}
g(v_1,\ldots,v_m)
g(v_1,\ldots,v_q,v_{m+1},\ldots,v_{d_q})
\prod_{i=1}^{d_q}v_i^{\alpha_{j_i}-1}
\,{\rm d}\boldsymbol v,
\end{multline*}
where
\[
\Delta_{d_q-1}
=
\left\{
(v_1,\ldots,v_{d_q})\in(0,1)^{d_q}:
\sum_{i=1}^{d_q}v_i=1
\right\},
\quad d_q=2m-q.
\]
For particular choices of $g$, the above integral may simplify further.
In general, however, the overlap between the two sets of arguments
prevents a simple closed-form expression.
The above integral therefore provides a
general expression that can be evaluated analytically or numerically for the
generators considered.
\end{remark}

Hence, we obtain the following exact representation.
\begin{theorem}
\label{thm:variance}
Under the assumptions of Theorem~\ref{thm:Expectation},
{
\begin{align*}
\operatorname{Var}[\widehat{I(g)}]
=
\frac{n^2}{m^2C_g^2}
\left\{
\frac{1}{\binom{n}{m}}
\sum_{q=q_0}^{m}
\binom{m}{q}
\binom{n-m}{m-q}
H_q
-
\left\{
\sum_{j_1,\ldots,j_n=1}^{\ell}
\left(
\prod_{i=1}^{n}\pi_{j_i}
\right) 
\left(
{\sum_{i=1}^m \alpha_{j_i}} 
\right)
{
\mathbb{E}_{\operatorname{Dir}
(\alpha_{j_1},\ldots,\alpha_{j_m})}
\!\left[
g(V_1,\ldots,V_m)
\right]
\over 
\sum_{i=1}^n \alpha_{j_i}
}
\right\}^2
\right\},
\end{align*}
}
where
$
(V_1,\ldots,V_m)^\top 
\sim
\operatorname{Dir}
(
\alpha_{j_1},
\ldots,
\alpha_{j_m}
)
$,
$
q_0=\max\{0,2m-n\},
\
d_q=2m-q,
$
and $H_q$ is as given in \eqref{eq:Hq-mixture}.
\end{theorem}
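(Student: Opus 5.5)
The plan is to use the elementary identity $\operatorname{Var}[\widehat{I(g)}]=\mathbb E[\widehat{I(g)}^2]-\{\mathbb E[\widehat{I(g)}]\}^2$. Each of the two terms is then taken from a result already established. First I will check that both moments are finite. By (A2), $0\leqslant g(U_A)\leqslant C_g\sum_{i\in A}U_i^*\leqslant C_g$, where $U_A$ denotes the normalized vector $(X_i/S)_{i\in A}$. Hence
\[
0\leqslant \widehat{I(g)}=\frac{n}{mC_g\binom{n}{m}}\sum_{A\in\mathcal A_{n,m}}g(U_A)\leqslant \frac{n}{m}.
\]
So $\widehat{I(g)}$ is bounded, and the variance exists. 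This also justifies every interchange of sums and expectations used below, since all sums involved are finite.

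For the second moment I will invoke Proposition~\ref{prop:second-moment}:
\[
\mathbb E[\widehat{I(g)}^2]=\frac{n^2}{m^2C_g^2}\cdot\frac{1}{\binom{n}{m}}\sum_{q=q_0}^m\binom{m}{q}\binom{n-m}{m-q}H_q.
\]
Here $H_q$ is defined in \eqref{eq:Hq}. I then replace $H_q$ by its mixture expression \eqref{eq:Hq-mixture}. That identity was derived just before the theorem: condition on $Y_1=j_1,\ldots,Y_n=j_n$, apply Lemma~\ref{lem:GD} to get a $\operatorname{Dir}(\alpha_{j_1},\ldots,\alpha_{j_n})$ law, and then use Dirichlet aggregation together with degree-one homogeneity of $g$. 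The aggregation step gives the factor $\mathbb E[(\sum_{i\leqslant d_q}U_i)^2]$, which is the ratio of rising factorials appearing there.

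I will briefly re-verify one point in that derivation. Because the product $g(\cdot)g(\cdot)$ is homogeneous of degree two in $(U_1,\ldots,U_{d_q})$, it factors as $(\sum_{i\leqslant d_q}U_i)^2$ times the same product evaluated at $(V_1,\ldots,V_{d_q})$. The independence of $\sum_{i\leqslant d_q}U_i$ and $V$ then splits the expectation. Finally, the sum over $j_{d_q+1},\ldots,j_n$ separates from the $V$-expectation, because the latter depends only on $j_1,\ldots,j_{d_q}$.

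For the first moment I will take $\mathbb E[\widehat{I(g)}]$ directly from Theorem~\ref{thm:Expectation}. Squaring it gives
\[
\frac{n^2}{m^2C_g^2}\left\{\sum_{j_1,\ldots,j_n}\cdots\right\}^2.
\]
Factoring out the common constant $n^2/(m^2C_g^2)$ from both moments yields exactly the displayed formula.

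The main obstacle is not in this final assembly but in the rigorous justification of \eqref{eq:Hq-mixture}. The step that needs care is the conditional factorization: given the latent labels, the scalar $\sum_{i\leqslant d_q}U_i\sim\operatorname{Beta}(\sum_{i\leqslant d_q}\alpha_{j_i},\sum_{i>d_q}\alpha_{j_i})$ must be independent of the renormalized subvector $V$. This is the standard Dirichlet neutrality/aggregation property. I will cite it as a known fact, obtainable for instance by representing the Dirichlet vector through independent Gammas and applying the Gamma--Dirichlet decomposition to the first $d_q$ coordinates.

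The degenerate case $d_q=n$ occurs when $q=q_0=2m-n>0$. Then the inner sum over $j_{d_q+1},\ldots,j_n$ is empty, the ratio equals $1$, and the formula remains consistent with the empty-product convention.
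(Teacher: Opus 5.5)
Your proposal is correct and matches the paper's proof, which likewise combines $\operatorname{Var}[\widehat{I(g)}]=\mathbb E[\widehat{I(g)}^2]-\{\mathbb E[\widehat{I(g)}]\}^2$ with Proposition~\ref{prop:second-moment} (using $H_q$ in the form \eqref{eq:Hq-mixture}) and Theorem~\ref{thm:Expectation}. Your boundedness check and your re-derivation of the aggregation step are harmless extras that the paper leaves implicit; in fact $\widehat{I(g)}\leqslant 1$, and the degenerate case $d_q=n$ also arises at $q=0$ when $n=2m$, which is handled in the same way.
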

\begin{proof}
The result follows from
$
\operatorname{Var}[\widehat{I(g)}]
=
\mathbb E[\widehat{I(g)}^2]
-
\{\mathbb E[\widehat{I(g)}]\}^2,
$
together with Proposition~\ref{prop:second-moment} and
Theorem \ref{thm:Expectation}.
\end{proof}

As in the preceding results, the common rate parameter $\lambda$
does not appear in $\operatorname{Var}[\widehat{I(g)}]$. Thus, the variance of the
normalized estimator depends on the Gamma mixture model through the
mixture weights and shape parameters.

The MSE follows from the variance given in Theorem \ref{thm:variance} and the exact bias obtained in Corollary \ref{cor:Bias}.

\begin{corollary}
\label{cor:MSE}
Under the assumptions of Theorem~\ref{thm:variance},
{
\begin{multline*}
\!\!\!\!\!\!
\operatorname{MSE}[\widehat{I(g)}]
=
\frac{1}{m^2C_g^2}
\left\{
\frac{n^2}{\binom{n}{m}} \!
\sum_{q=q_0}^{m}
\binom{m}{q}
\binom{n-m}{m-q}
H_q
-
\left\{
n \!
\sum_{j_1,\ldots,j_n=1}^{\ell} \!
\left(
\prod_{i=1}^{n}\pi_{j_i}
\right) 
\left(
{\sum_{i=1}^m \alpha_{j_i}} 
\right)
{
\mathbb{E}_{\operatorname{Dir}
(\alpha_{j_1},\ldots,\alpha_{j_m})}
\!\left[
g(V_1,\ldots,V_m)
\right]
\over 
\sum_{i=1}^n \alpha_{j_i}
}
\right\}^2
\right.
\\[0,2cm]
+
\left.
\left\{
\sum_{j_1,\ldots,j_m=1}^{\ell} \!
\left(
\prod_{i=1}^{m}\pi_{j_i}
\right)  \!
\left(
\sum_{i=1}^{m}\alpha_{j_i}
\right)
\mathbb{E}_{\operatorname{Dir}
(\alpha_{j_1},\ldots,\alpha_{j_m})}
\!\left[
g(V_1,\ldots,V_m)
\right] \!\!
\sum_{j_{m+1},\ldots,j_n=1}^{\ell} \!
\left(
\prod_{i=m+1}^{n}\pi_{j_i}
\right)  \!
\Bigg(
{n\over \sum_{i=1}^n \alpha_{j_i}} 
-
\frac1{\sum_{r=1}^{\ell}\pi_r\alpha_r}
\Bigg)
\right\}^2
\right\}, 
\end{multline*}
} \noindent
where
$
(V_1,\ldots,V_m)^\top 
\sim
\operatorname{Dir}
(
\alpha_{j_1},
\ldots,
\alpha_{j_m}
)
$,
$
q_0=\max\{0,2m-n\},
\
d_q=2m-q,
$
and $H_q$ is as given in \eqref{eq:Hq-mixture}.
\end{corollary}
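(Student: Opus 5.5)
The plan is to use the standard decomposition
\[
\operatorname{MSE}[\widehat{I(g)}]
=
\operatorname{Var}[\widehat{I(g)}]
+
\bigl\{\operatorname{Bias}(\widehat{I(g)},I(g))\bigr\}^2 ,
\]
and substitute the two exact expressions already obtained: Theorem~\ref{thm:variance} for the variance and Corollary~\ref{cor:Bias} for the bias. Both are finite, since (A2) gives $0\leqslant g\leqslant C_g\sum_i x_i$ and the Gamma mixture has moments of all orders, so $\widehat{I(g)}$ is bounded and square integrable.

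First, rewrite the variance from Theorem~\ref{thm:variance} with the prefactor $1/(m^2C_g^2)$ pulled out. The term $n^2/(m^2C_g^2)$ multiplying the $H_q$ sum becomes $\frac{1}{m^2C_g^2}\cdot\frac{n^2}{\binom{n}{m}}\sum_q\binom{m}{q}\binom{n-m}{m-q}H_q$. The squared expectation term $n^2\{\cdots\}^2$ becomes $\{n\sum_{j_1,\ldots,j_n}\cdots\}^2$, which is exactly $\{mC_g\,\mathbb{E}[\widehat{I(g)}]\}^2$ by Theorem~\ref{thm:Expectation}. This reproduces the first two terms in the braces of the statement.

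Second, square the bias from Corollary~\ref{cor:Bias}. Its prefactor $1/(mC_g)$ squared is again $1/(m^2C_g^2)$, and what remains is the square of the double sum over $(j_1,\ldots,j_m)$ and $(j_{m+1},\ldots,j_n)$ with the factor $n/\sum_{i=1}^n\alpha_{j_i}-1/\sum_r\pi_r\alpha_r$. This is the third term in the statement. Adding the two pieces and factoring out the common $1/(m^2C_g^2)$ gives the claimed formula, with $H_q$ given by \eqref{eq:Hq-mixture}.

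There is no real obstacle; the argument is bookkeeping. The point needing care is matching normalizations: the $n$ must sit inside the squared expectation term and outside the bias sum, while $n^2/\binom{n}{m}$ must multiply the $H_q$ sum. As a sanity check, expanding the last two terms recovers $\mathbb{E}[\widehat{I(g)}^2]-2I(g)\mathbb{E}[\widehat{I(g)}]+I(g)^2$. This uses that the bias double sum equals $mC_g\{\mathbb{E}[\widehat{I(g)}]-I(g)\}$, which follows by splitting the sum over $(j_1,\ldots,j_n)$ and using $\sum_{j_{m+1},\ldots,j_n}\prod_{i=m+1}^n\pi_{j_i}=1$ together with Proposition~\ref{prop:Ig}.
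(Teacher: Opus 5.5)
Your proposal is correct and follows the paper's route exactly. The paper obtains the MSE as $\operatorname{Var}[\widehat{I(g)}]+\{\operatorname{Bias}(\widehat{I(g)},I(g))\}^2$, substituting Theorem~\ref{thm:variance} and Corollary~\ref{cor:Bias} and factoring out $1/(m^2C_g^2)$, and your placement of $n$ inside the squared-expectation term and of $n^2/\binom{n}{m}$ in front of the $H_q$ sum is right. One small remark: $0\leqslant\widehat{I(g)}\leqslant 1$ follows from (A2) alone, since $U_n(g)\leqslant mC_g\overline{X}$, so the moment condition on the mixture is not needed for square integrability.
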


\begin{corollary}
\label{cor:variance-gamma}
Consider the special case in which the finite Gamma mixture consists of
a single Gamma distribution, namely,
$
\ell=1,
\ 
X\sim\operatorname{Gamma}(\alpha,\lambda).
$
Then,
{\small
\begin{align*}
\!\!\!\!\!
&\operatorname{Var}[\widehat{I(g)}]
=
\frac{n}
{m^2(n\alpha+1) C_g^2 \binom{n}{m}}
\sum_{q=q_0}^{m}
\binom{m}{q}
\binom{n-m}{m-q}
\\[0,2cm]
&\times 
\Bigg\{
d_q(d_q\alpha+1)
\mathbb E_{\operatorname{Dir}_{d_q}(\alpha,\ldots,\alpha)}
\Big[
g(V_1,\ldots,V_m)
g(V_1,\ldots,V_q,V_{m+1},\ldots,V_{d_q})
\Big]
-
\frac{m^2(n\alpha+1)}{n}
\left\{
\mathbb E_{\operatorname{Dir}_m(\alpha,\ldots,\alpha)}
\left[
g(V_1,\ldots,V_m)
\right]
\right\}^{2}
\Bigg\},
\end{align*}
}\noindent
where
$
(V_1,\ldots,V_m)^\top 
\sim
\operatorname{Dir}_m
(
\alpha,
\ldots,
\alpha
)
$,
$
(V_1,\ldots,V_{d_q})^\top
\sim
\operatorname{Dir}_{d_q}(\alpha,\ldots,\alpha)
$
$q_0=\max\{0,2m-n\}$ and $d_q=2m-q$.

Consequently,
$
\operatorname{MSE}[\widehat{I(g)}]
=
\operatorname{Var}[\widehat{I(g)}].
$
\end{corollary}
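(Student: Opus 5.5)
The plan is to specialize Theorem~\ref{thm:variance} to $\ell=1$, where every sum over latent labels $j_1,\ldots,j_n$ collapses to the single term $j_1=\cdots=j_n=1$, with $\pi_1=1$ and $\alpha_1=\alpha$. All ratios of sums of shape parameters then become ratios of integers times $\alpha$, and the result should follow by direct algebra.

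I will first handle the squared-mean term. Theorem~\ref{thm:Expectation} gives $\mathbb E[\widehat{I(g)}]=\frac{n}{mC_g}\cdot\frac{m\alpha}{n\alpha}\,\mathbb E_{\operatorname{Dir}_m}[g]=\frac{1}{C_g}\mathbb E_{\operatorname{Dir}_m}[g]$. Hence the subtracted term inside the braces of Theorem~\ref{thm:variance}, after the prefactor $n^2/(m^2C_g^2)$, equals $\frac{1}{C_g^2}\{\mathbb E_{\operatorname{Dir}_m}[g]\}^2$.

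Next I will treat $H_q$ using \eqref{eq:Hq-mixture}. With $\ell=1$, the outer factor is $(d_q\alpha)(d_q\alpha+1)$ times the $d_q$-dimensional Dirichlet cross moment, and the inner sum is $1/\{(n\alpha)(n\alpha+1)\}$. Therefore
\[
H_q=\frac{d_q(d_q\alpha+1)}{n(n\alpha+1)}\,\mathbb E_{\operatorname{Dir}_{d_q}}\big[g(V_1,\ldots,V_m)g(V_1,\ldots,V_q,V_{m+1},\ldots,V_{d_q})\big].
\]
Substituting into the first term gives
\[
\frac{n^2}{m^2C_g^2\binom nm}\sum_q\binom mq\binom{n-m}{m-q}H_q
=\frac{n}{m^2(n\alpha+1)C_g^2\binom nm}\sum_q\binom mq\binom{n-m}{m-q}\,d_q(d_q\alpha+1)\,\mathbb E_{\operatorname{Dir}_{d_q}}[\cdots].
\]

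The only mildly delicate step is absorbing the squared-mean term into the same sum. For this I will use Vandermonde's identity $\sum_{q=q_0}^m\binom mq\binom{n-m}{m-q}=\binom nm$. It lets me write
\[
\frac{1}{C_g^2}\{\mathbb E[g]\}^2=\frac{n}{m^2(n\alpha+1)C_g^2\binom nm}\sum_q\binom mq\binom{n-m}{m-q}\,\frac{m^2(n\alpha+1)}{n}\{\mathbb E_{\operatorname{Dir}_m}[g]\}^2.
\]
I also need to check that the range $q_0\le q\le m$ captures exactly the nonzero terms: for $q<q_0$ we have $m-q>n-m$, so those binomials vanish. Subtracting termwise then yields the stated formula.

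Finally, Corollary~\ref{cor:gamma} gives zero bias, so $\operatorname{MSE}=\operatorname{Var}+\operatorname{Bias}^2=\operatorname{Var}$.
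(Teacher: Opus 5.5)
Your proposal is correct and matches the paper's argument: the paper also specializes Theorem~\ref{thm:variance} (via \eqref{eq:Hq-mixture}) to $\ell=1$, uses $\sum_{q=q_0}^{m}\binom{m}{q}\binom{n-m}{m-q}=\binom{n}{m}$ to fold the squared-mean term $C_g^{-2}\{\mathbb E_{\operatorname{Dir}_m(\alpha,\ldots,\alpha)}[g(V_1,\ldots,V_m)]\}^2$ into the $q$-sum, and gets $\operatorname{MSE}=\operatorname{Var}$ from the zero bias in Corollary~\ref{cor:gamma}. You write out the algebra the paper leaves implicit, namely $H_q=\frac{d_q(d_q\alpha+1)}{n(n\alpha+1)}\,\mathbb E_{\operatorname{Dir}_{d_q}(\alpha,\ldots,\alpha)}[\cdots]$ and the vanishing of the binomials for $q<q_0$, and it checks out.
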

\begin{proof}
Since $\ell=1$, we have $j_1=\cdots=j_n=1$, $\pi_1=1$, and
$\alpha_1=\alpha$. The result then follows directly from
Theorem~\ref{thm:variance} and the identity
$
\sum_{q=q_0}^{m}
\binom{m}{q}
\binom{n-m}{m-q}
=
\binom{n}{m}.
$
\end{proof}

\begin{remark}
When $m=n$, the only possible value of $q$ is
$q=q_0=n=d_q$. Therefore, by Corollary \ref{cor:variance-gamma},
\begin{align*}
\operatorname{Var}[\widehat{I(g)}]
&=
\frac{1}{C_g^2}
\left\{
\mathbb E_{\operatorname{Dir}_{n}(\alpha,\ldots,\alpha)}
\left[g(V_1,\ldots,V_n)^2\right]
-
\left[
\mathbb E_{\operatorname{Dir}_{n}(\alpha,\ldots,\alpha)}
\left[g(V_1,\ldots,V_n)\right]
\right]^2
\right\}
\\[0,2cm]
&=
\operatorname{Var}_{\operatorname{Dir}_{n}(\alpha,\ldots,\alpha)}\left[{g(V_1,\ldots,V_n)\over C_g}\right].
\end{align*}
Thus, when $m=n$, the variance reduces to the variance of
$g(V_1,\ldots,V_n)/C_g$ under the
$\operatorname{Dir}_{n}(\alpha,\ldots,\alpha)$ distribution.
\end{remark}

\section{Simulation study}
\label{Simulation study}


\subsection{Evaluating the Bias and RMSE of estimates}

In this subsection, we present a Monte Carlo simulation study to evaluate the estimator $\widehat{I(g)}$ \eqref{eq:Tn} for the NSIIs \eqref{eq:Ig-Dir} with $m=2$. 
Set a Gamma mixture \eqref{eq:gamma-mixture} with 
$\ell=2$ and
$\boldsymbol{\theta}=(\pi_1,\pi_2,\alpha_1,\alpha_2,\lambda)^\top$.

We conducted $M=10,000$ Monte Carlo replications, generating samples $\mathbf{X}^{(m)} = \{X_1^{(m)}, X_2^{(m)},\cdots, X_n^{(m)}\}$ drawn from the $\operatorname{GM}(\boldsymbol{\theta})$ considering for the simulations the true population's parameter $\boldsymbol{\theta}=(0.6,0.4,0.5,5,1)^\top$, and the sample sizes $n\in\{10, 20, 50, 100, 200\}$.
Moreover, we initially chose the normalized scale-invariant inequality indices Gini, SCV, Power, Shannon entropy, Product, Symmetric, Maximum, and Minimum. 

Aiming to study the Monte Carlo estimates, we compute the Bias and root mean squared error (RMSE) of each estimated value with respect to its true population values. 
Figure \ref{fig:sim_m2} presents the Bias and RMSE for each estimated parameter. 
Observe that when the sample size $n$ increases, both the Bias and the RMSE of the Monte Carlo simulations generally decrease. Table \ref{tab:mix_m2} complements this analysis by showing that the Monte Carlo biases align with the exact biases (Corollary~\ref{cor:Bias}).

\begin{figure}[H]
    \centering
\includegraphics[width=1.0\linewidth]{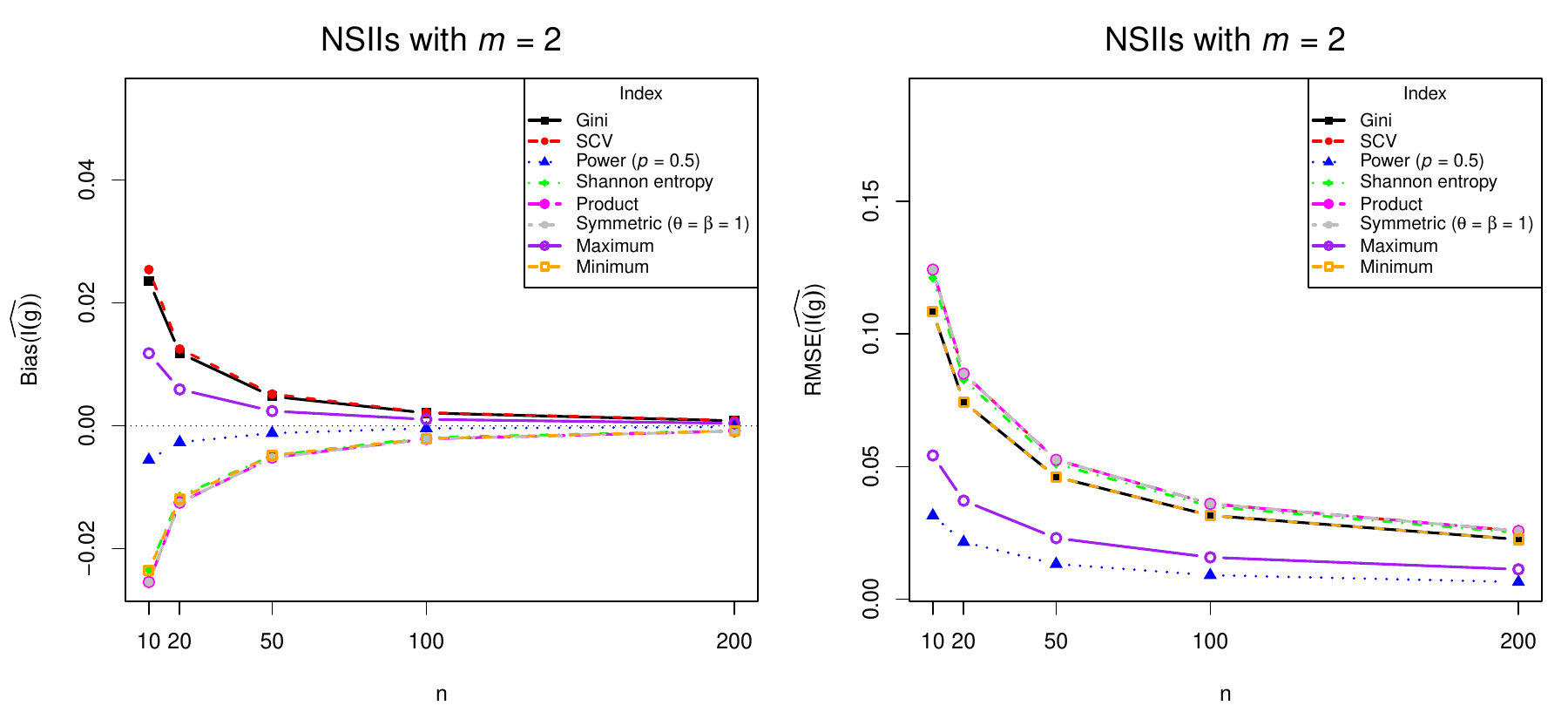}
    \caption{Bias (left) and RMSE (right) for the estimator $\widehat{I(g)}$ \eqref{eq:Tn} for the NSIIs \eqref{eq:Ig-Dir} based on Monte Carlo simulations of $\operatorname{GM}(\boldsymbol{\theta})$. }
    \label{fig:sim_m2}
\end{figure}

\begin{table}[H]
\centering
\caption{{Exact (Corollary~\ref{cor:Bias}) and Monte Carlo (MC) bias and RMSE of $\widehat{I(g)}$ for the NSIIs with $m=2$, under $X\sim\mathrm{GM}(\boldsymbol{\theta})$ with $\boldsymbol{\theta}=(0.6,0.4,0.5,5,1)^\top$ and $M=10{,}000$ replications.}}
\label{tab:mix_m2}
\footnotesize
\setlength{\tabcolsep}{2.2pt}
{
\begin{tabular}{llcrrrrrrrrrr}
\hline
& & & \multicolumn{2}{c}{$n=10$} & \multicolumn{2}{c}{$n=20$} & \multicolumn{2}{c}{$n=50$} & \multicolumn{2}{c}{$n=100$} & \multicolumn{2}{c}{$n=200$} \\
\cline{4-5}\cline{6-7}\cline{8-9}\cline{10-11}\cline{12-13}
Index & $I(g)$ & & Bias & RMSE & Bias & RMSE & Bias & RMSE & Bias & RMSE & Bias & RMSE \\
\hline
Gini & $0.6068$ & MC & $0.0236$ & $0.1084$ & $0.0119$ & $0.0744$ & $0.0048$ & $0.0461$ & $0.0021$ & $0.0316$ & $0.0008$ & $0.0226$ \\
 & & Exact & $0.0242$ &  & $0.0125$ &  & $0.0050$ &  & $0.0025$ &  & $0.0013$ &  \\
SCV & $0.4841$ & MC & $0.0254$ & $0.1243$ & $0.0125$ & $0.0851$ & $0.0052$ & $0.0526$ & $0.0021$ & $0.0360$ & $0.0009$ & $0.0257$ \\
 & & Exact & $0.0260$ &  & $0.0134$ &  & $0.0054$ &  & $0.0027$ &  & $0.0014$ &  \\
Power ($p=0.5$) & $0.8999$ & MC & $-0.0055$ & $0.0316$ & $-0.0027$ & $0.0215$ & $-0.0012$ & $0.0133$ & $-0.0004$ & $0.0091$ & $-0.0002$ & $0.0065$ \\
 & & Exact & $-0.0057$ &  & $-0.0030$ &  & $-0.0012$ &  & $-0.0006$ &  & $-0.0003$ &  \\
Shannon entropy & $0.5651$ & MC & $-0.0234$ & $0.1211$ & $-0.0115$ & $0.0828$ & $-0.0049$ & $0.0511$ & $-0.0019$ & $0.0350$ & $-0.0008$ & $0.0250$ \\
 & & Exact & $-0.0240$ &  & $-0.0124$ &  & $-0.0050$ &  & $-0.0025$ &  & $-0.0013$ &  \\
Product & $0.5159$ & MC & $-0.0254$ & $0.1243$ & $-0.0125$ & $0.0851$ & $-0.0052$ & $0.0526$ & $-0.0021$ & $0.0360$ & $-0.0009$ & $0.0257$ \\
 & & Exact & $-0.0260$ &  & $-0.0134$ &  & $-0.0054$ &  & $-0.0027$ &  & $-0.0014$ &  \\
Symmetric subcase ($\theta=\beta=1$) & $0.5159$ & MC & $-0.0254$ & $0.1243$ & $-0.0125$ & $0.0851$ & $-0.0052$ & $0.0526$ & $-0.0021$ & $0.0360$ & $-0.0009$ & $0.0257$ \\
 & & Exact & $-0.0260$ &  & $-0.0134$ &  & $-0.0054$ &  & $-0.0027$ &  & $-0.0014$ &  \\
Maximum & $0.8034$ & MC & $0.0118$ & $0.0542$ & $0.0059$ & $0.0372$ & $0.0024$ & $0.0231$ & $0.0010$ & $0.0158$ & $0.0004$ & $0.0113$ \\
 & & Exact & $0.0121$ &  & $0.0062$ &  & $0.0025$ &  & $0.0013$ &  & $0.0006$ &  \\
Minimum & $0.3932$ & MC & $-0.0236$ & $0.1084$ & $-0.0119$ & $0.0744$ & $-0.0048$ & $0.0461$ & $-0.0021$ & $0.0316$ & $-0.0008$ & $0.0226$ \\
 & & Exact & $-0.0242$ &  & $-0.0125$ &  & $-0.0050$ &  & $-0.0025$ &  & $-0.0013$ &  \\
\hline
\end{tabular}
}
\end{table}

\subsection{Higher-order $m$, weight configurations and shape influences}

The estimator $\widehat{I(g)}$ in \eqref{eq:Tn} depends on the value of $m$ and on a configuration, either $(j,k)^\top$ or $(a_1,\ldots,a_m)^\top$, which specifies different generating functions $g$, and thus possible different $I(g)$. 
As in the previous subsection, the biases are of order $1/n$, and the RMSE decreases as $1/n^{1/2}$.
Figure \ref{fig:sim_m} presents the Monte Carlo bias and RMSE for the estimator $\widehat{I(g)}$ \eqref{eq:Tn} for the $m$th Gini, Extended $m$th Gini, and Linear OS indices based on Monte Carlo simulations of $\operatorname{GM}(\boldsymbol{\theta})$.
Figure \ref{fig:sim_m} shows that lower biases depend not only on the $m$ values but also on the configurations chosen for each index.

\begin{figure}[H]
    \centering
\includegraphics[width=1.0\linewidth]{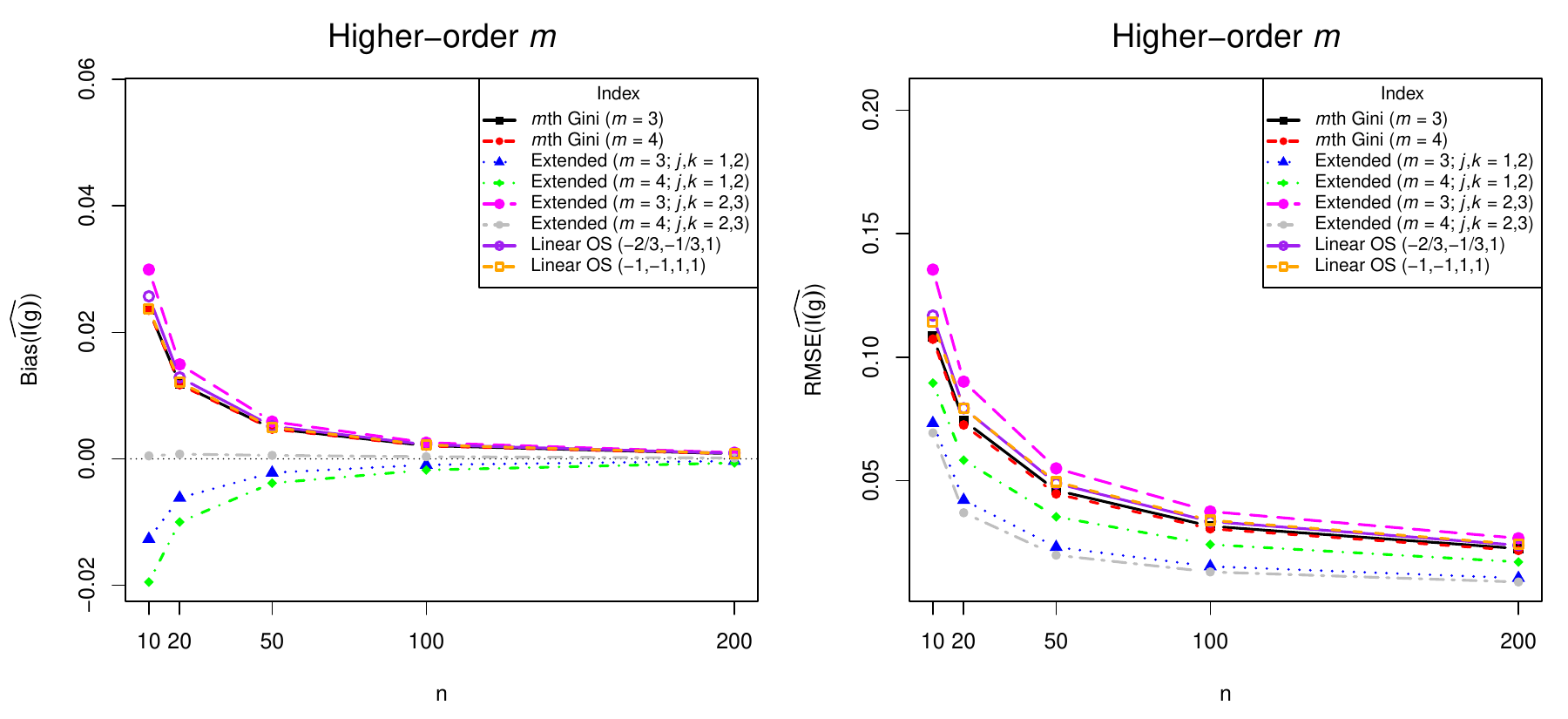}
    \caption{Bias (left) and RMSE (right) for the estimator $\widehat{I(g)}$ \eqref{eq:Tn} for $m$th Gini, Extended $m$th Gini, and Linear OS indices based on Monte Carlo simulations of $\operatorname{GM}(\boldsymbol{\theta})$.}
    \label{fig:sim_m}
\end{figure}

Under the same configurations of the $m$th Gini, Extended $m$th Gini, and Linear OS indices, we also analyze the impact of increasing the shape parameter of one of the components. 
The previous results showed that a sample size of $n=100$ yields good outcomes. We now keep this sample size to study the impacts of changing $\alpha_2$.
Consider a $\operatorname{GM}(\boldsymbol{\theta}_1)$, where $\boldsymbol{\theta}=(0.6,0.4,0.5,\alpha_2,1)^\top$, $\alpha_2\in\{5, 10, 20, 50, 100\}$.
Figure \ref{fig:sim_shape} shows that the Monte Carlo bias tends to increase for large values of $n$, while in general it stabilizes for small values. On the other hand, the RMSE slightly increased with the shape parameter.

\begin{figure}[H]
    \centering
\includegraphics[width=1.0\linewidth]{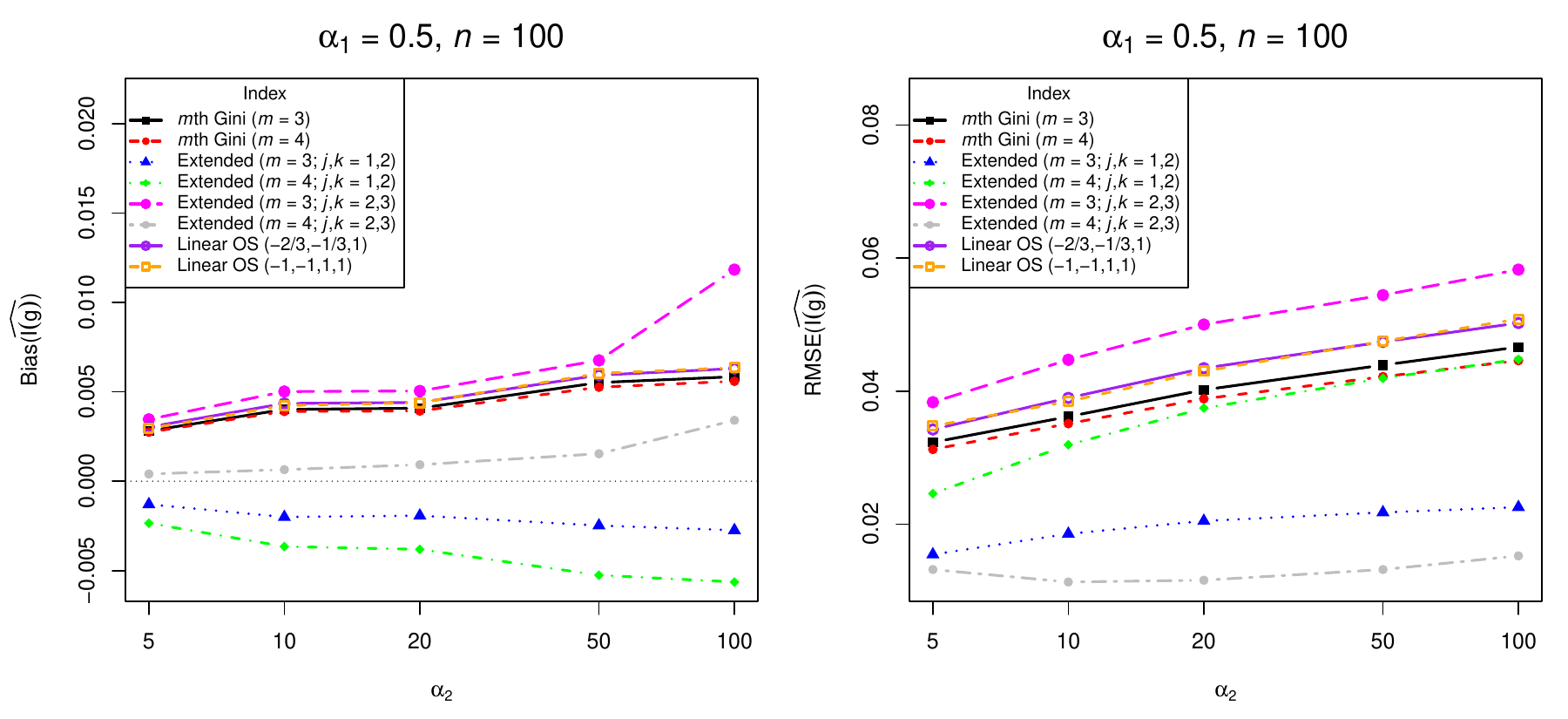}
    \caption{Bias (left) and RMSE (right) for the estimator $\widehat{I(g)}$ \eqref{eq:Tn} for $m$th Gini, Extended $m$th Gini and Linear OS indices based on Monte Carlo simulations of $\operatorname{GM}((0.6,0.4,0.5,\alpha_2,1)^\top)$, varying $\alpha_2\in\{5, 10, 20, 50, 100\}$ on the x-axis. }
    \label{fig:sim_shape}
\end{figure}

\subsection{Assessing asymptotic normality}

In this subsection, we assess the asymptotic normality of the estimator $\widehat{I(g)}$ established in Theorem~\ref{thm:asymptotic-normality}. 
Consider 
$$Z_n=\sqrt{n}\{\widehat{I(g)}-I(g)\}/\sigma_I,$$
where $\sigma_I^2$ is the asymptotic variance in
Theorem~\ref{thm:asymptotic-normality}, computed by numerical integration.
Figure~\ref{fig:normality} presents boxplots, histograms, and qq-plots of the statistics $Z_n^{(m)}$ $(m=1,\cdots, 10000)$ based on the Monte Carlo simulations with sample size $n=200$ reported in Figures \ref{fig:sim_m2} and \ref{fig:sim_m}. 
The plots show a symmetric empirical distribution of the estimates, a good fit to the normal PDF, and well-behaved normal residuals.
The study was reproduced for all indices reported in Figures \ref{fig:sim_m2} and \ref{fig:sim_m}, but, for simplicity, Figure \ref{fig:normality} shows the results for the Gini, Product, and $m$th indices.

\begin{figure}[H]
    \centering
\includegraphics[width=1.0\linewidth]{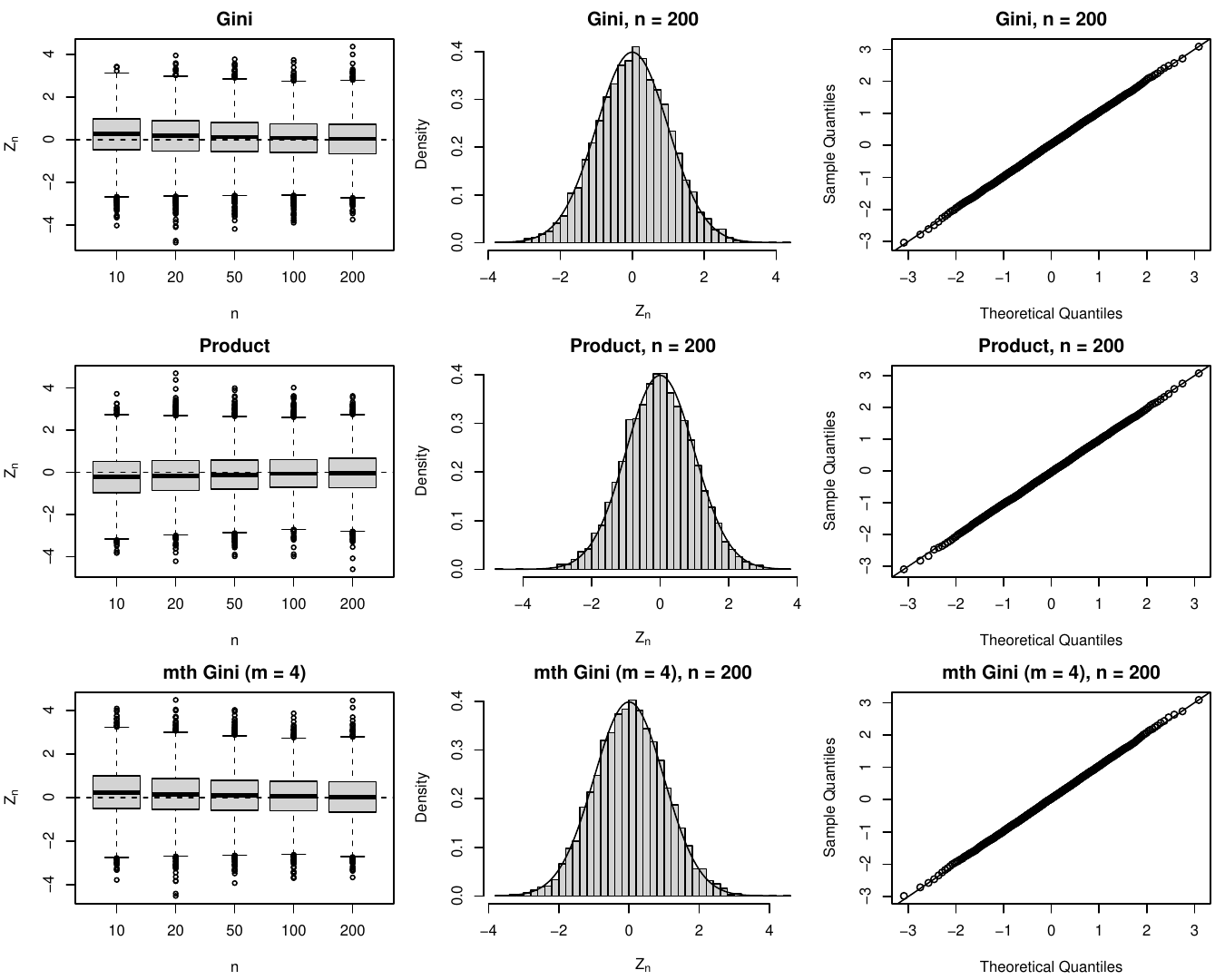}
    \caption{Boxplot (left), histograms (middle), and normal QQ-plots for the Monte Carlo estimates $\widehat{I(g)}^{(m)}$.  Results for Gini are shown on top, Product on middle, and $m$th Gini on the bottom.}
    \label{fig:normality}
\end{figure}

 \subsection{Analysis of the estimator in GM populations with non-identical rates}

Previous sections describe the behavior of the estimator $\widehat{I(g)}$ in \eqref{eq:Tn} only under the GM with identical-rate parameters. In this subsection, we discuss its natural extensions to non-identical rates, that is, 
\begin{equation}
\label{eq:gamma-mixture}
f_X(x;\boldsymbol{\theta})=\sum_{r=1}^{2}
\pi_r \, 
\frac{\lambda^{\alpha_r}}{\Gamma(\alpha_r)} \, 
x^{\alpha_r-1}\text{e}^{-\lambda_r x},
\quad x>0,
\end{equation}
where $(\pi_1,\pi_2,\alpha_1,\alpha_2)^\top=(0.6,0.4,0.5,5)^\top$ and
$(\lambda_1,\lambda_2)^\top=(1,0.2)^\top$.
In this numerical study, the $m$th Gini, Extended $m$th Gini, and Linear OS indices are computed using numerical integration.

{The proofs of
Theorems~\ref{thm:strong-consistency} and~\ref{thm:asymptotic-normality} rely on the identical rate assumption. Therefore, we can expect that $\widehat{I(g)}$ remains strongly consistent and asymptotically normal.} The results shown in Figure \ref{fig:shapes} support this extension, indicating that the estimator converges even in the case of more general populations.

\begin{figure}[H]
    \centering
\includegraphics[width=1.0\linewidth]{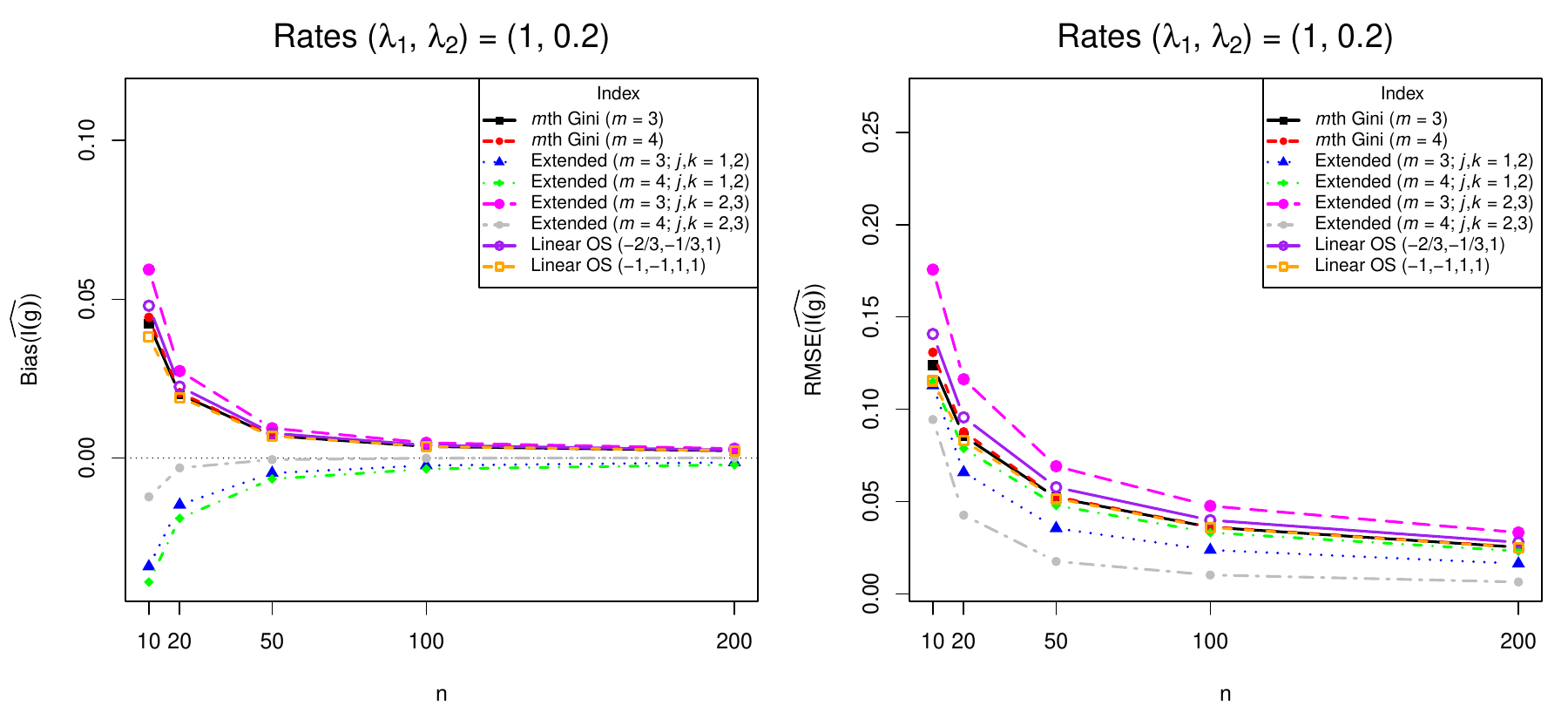}
    \caption{Bias (left) and RMSE (right) for the estimator $\widehat{I(g)}$ \eqref{eq:Tn} for $m$th Gini, Extended $m$th Gini, and Linear OS indices based on Monte Carlo simulations of $\operatorname{GM}$ populations with non-identical rates.}
    \label{fig:shapes}
\end{figure}

\section{Concluding remarks}
\label{sect-5}

In this paper, we have developed a general finite-sample framework for estimating normalized scale-invariant inequality indices (NSIIs) under finite Gamma mixture models with a common rate parameter. The proposed framework applies to a broad class of indices generated by nonnegative measurable symmetric functions that are homogeneous of degree one and satisfy the required normalization condition. This class includes, among others, the Gini coefficient, the $m$th Gini index, the extended $m$th Gini index, linear order-statistic inequality indices, and several entropy- and dispersion-based measures.

The Gamma--Dirichlet mixture representation provides the main probabilistic tool underlying our analysis. It allows the population index and the expectation of its natural $U$-statistic estimator to be expressed in terms of Dirichlet expectations, mixture weights, and component shape parameters. Based on these representations, we derived an exact finite-sample expression for the bias of the estimator. We also established that the bias converges to zero as the sample size increases, thereby proving the asymptotic unbiasedness of the estimator. Furthermore, when the mixture degenerates to a single Gamma distribution, the finite-sample bias vanishes identically, recovering the exact unbiasedness property previously established for NSIIs under the Gamma model. Although the bias admits an exact representation, its sign cannot, in general, be determined solely from the nonnegativity of the function generating the index.

We also derived exact finite-sample expressions for the variance and mean squared error of the estimator. The variance calculation accounts for the dependence among the terms of the $U$-statistic by classifying pairs of subsets according to the size of their intersection. These results provide a unified characterization of the finite-sample accuracy of NSII estimators and show that, after normalization, the common rate parameter does not affect their bias or variance. Instead, their finite-sample behavior is determined by the mixture weights, component shape parameters, and the structure of the function generating the index.

In addition to these exact finite-sample results, we established strong consistency and asymptotic normality of the estimator. Thus, although finite Gamma mixtures may introduce finite-sample bias, this bias vanishes asymptotically, and the estimator retains standard large-sample properties.

The numerical illustrations and simulation study complement the theoretical findings by examining the finite-sample behavior of the estimator under different Gamma mixture configurations. In particular, the numerical results illustrate how the mixture weights, component shape parameters, and sample size affect the bias, variance, and mean squared error. The simulations also provide empirical evidence of the asymptotic decay of the bias and the consistency of the estimator as the sample size increases.

Several directions merit further investigation. An important extension would be to study finite Gamma mixtures with component-specific rate parameters, for which the Gamma--Dirichlet representation used here no longer applies in the same form. It would also be of interest to develop analogous finite-sample results for more general mixture models and to investigate the behavior of the proposed estimators under model misspecification. Further research could address alternative estimation procedures when the mixture parameters are unknown and must be estimated from the data, as well as the construction of confidence intervals and finite-sample inference procedures for NSIIs under mixture models.

Overall, our results extend the finite-sample estimation theory for NSIIs from the single Gamma model to a broader class of heterogeneous Gamma populations. The proposed framework provides a unified basis for quantifying finite-sample bias, variance, and mean squared error for a wide range of scale-invariant inequality measures, while clarifying the role of mixture heterogeneity in their estimation.



\section*{Declarations}

\paragraph*{Ethics Approval}
Not applicable.


\paragraph*{Disclosure statement}
There are no conflicts of interest to disclose.


\paragraph*{Author Contributions Statement}
All authors contributed equally to the conception and design of the study, data analysis, interpretation of the results, manuscript preparation, and revision. All authors read and approved the final version of the manuscript.


\bibliographystyle{apalike}


\end{document}